\newcommand{\N}{\mathbb{N}}
\newcommand{\Z}{\mathbb{Z}}
\newcommand{\R}{\mathbb{R}}
\newcommand{\inv}{^{-1}}
\newcommand{\eps}{\varepsilon}
\newcommand{\del}{\nabla}
\newcommand{\ind}{\operatorname{ind}}
\newcommand{\lap}{\Delta}
\newcommand{\bd}{\partial}
\newcommand{\cl}{\overline}
\newcommand{\weak}{\rightharpoonup}
\newcommand{\RP}{{\R \text{\normalfont P}}}
\newcommand{\grad}{\del}
\newcommand{\vol}{\operatorname{Vol}}
\newcommand{\f}{\colon}
\newcommand{\area}{\operatorname{Area}}
\newcommand{\h}{\mathcal H} 
\newcommand{\hv}{\mathfrak h}
\newcommand{\z}{\mathcal Z}
\newcommand{\B}{\mathcal B}
\newcommand{\C}{\mathcal C}
\newcommand{\sd}{\operatorname{\Delta}}
\theoremstyle{plain}
\newtheorem{theorem}{Theorem}
\newtheorem{corollary}[theorem]{Corollary}
\newtheorem{prop}[theorem]{Proposition}
\newtheorem{lem}[theorem]{Lemma}
\newtheorem{theo}{Theorem}
\theoremstyle{definition}
\newtheorem{defn}[theorem]{Definition}
\newtheorem{rem}[theorem]{Remark}
\begin{document}

\title{The Half-Volume Spectrum of a Manifold}
\author{Liam Mazurowski}
\author{Xin Zhou}
\address{Cornell University, Department of Mathematics, Ithaca, New York 14850}
\email{lmm334@cornell.edu}
\address{Cornell University, Department of Mathematics, Ithaca, New York 14850}
\email{xinzhou@cornell.edu}

\begin{abstract}
We define the half-volume spectrum $\{{\tilde \omega_p\}_{p\in \N}}$ of a closed manifold $(M^{n+1},g)$.  This is analogous to the usual volume spectrum of $M$, except that we restrict to $p$-sweepouts whose slices each enclose half the volume of $M$. We prove that the Weyl law continues to hold for the half-volume spectrum.  We define an analogous half-volume spectrum $\tilde c(p)$ in the phase transition setting. Moreover, for $3 \le n+1 \le 7$, we use the Allen-Cahn min-max theory to show that each $\tilde c(p)$ is achieved by a constant mean curvature surface enclosing half the volume of $M$ plus a (possibly empty) collection of minimal surfaces with even multiplicities. 
\end{abstract}

\maketitle

\section{Introduction}

The spectrum of the Laplacian is an important invariant of a closed Riemannian manifold $(M^{n+1},g)$.  A number $\lambda$ is called an eigenvalue of the Laplacian provided there is a function $u\f M\to \R$ such that $\lap u + \lambda u = 0$.  It is well-known that the eigenvalues form a discrete sequence $0 = \lambda_0 < \lambda_1 \le \lambda_2 \le \hdots$ and $\lambda_p \to \infty$ as $p\to \infty$.  In fact, the eigenvalues of Laplacian are characterized by the min-max formula
\[
\lambda_p = \inf_{(p+1)\text{-planes } P \subset W^{1,2}(M)} \left[\sup_{u\in P\setminus\{0\}} \frac{\int_M \vert \grad u\vert^2}{\int_M u^2}\right],
\]
and they satisfy the Weyl law 
\[
\lambda_p \sim {4\pi^2}\vol(B)^{-\frac{2}{n+1}}\vol(M)^{-\frac{2}{n+1}}p^{\frac{2}{n+1}}
\]
as $p\to \infty$. 
Here $B$ is the unit ball in $\R^{n+1}$. 

In \cite{G}, Gromov proposed a non-linear analog of the spectrum of the Laplacian.  Roughly speaking, he defines a $p$-sweepout of $M$ to be a family $X$ of hypersurfaces  with the following property: given any $p$ points in $M$, there is a hypersurface $\Sigma$ belonging to the family $X$ which passes through all $p$ of these points. Then he defines the $p$-widths 
\[
\omega_p = \inf_{p\text{-sweepouts } X} \left[\sup_{\Sigma\in X} \area(\Sigma)\right].
\]
See Section \ref{ap} for precise definitions. The sequence $\{\omega_p\}_{p\in \N}$ is called the volume spectrum of $M$. 

Gromov \cite{G1} and Guth \cite{Gu} proved that the volume spectrum satisfies sublinear growth bounds. 
Namely, there are constants $C_1$ and $C_2$ depending on $M$ such that 
\[
C_1  p^{\frac{1}{n+1}} \le \omega_p \le C_2  p^{\frac{1}{n+1}}.
\]
Later, Liokumovich, Marques, and Neves \cite{LMN} showed that the volume spectrum satisfies a Weyl law. That is, there is a universal constant $a_n$ depending only on the dimension such that 
\[
\omega_p \sim a_n \vol(M)^{\frac{n}{n+1}} p^{\frac{1}{n+1}}
\] 
as $p\to \infty$; see Chodosh-Mantoulidis \cite{CM2} for the calculation of $a_n$ when $n=2$.
The Weyl law for the volume spectrum has been instrumental in the proof of many results on the existence of minimal surfaces in Riemannian manifolds.

In the early 1980s, Almgren \cite{A}, Pitts \cite{P}, and Schoen-Simon \cite{SS} developed a min-max theory for the area functional on closed Riemannian manifolds. Their combined work implies that every closed Riemannian manifold of dimension $3 \le n+1 \le 7$, contains a closed, smooth, embedded minimal surface.  Around the same time, Yau \cite{Y} conjectured that every closed manifold should contain infinitely many minimal surfaces.  Marques and Neves devised a program to prove Yau's conjecture by using the Almgren-Pitts min-max theory to find a minimal surface with area $\omega_p$ for each $p\in \N$.

This program has now been successfully carried out. 
Fix a closed Riemannian manifold $(M^{n+1},g)$ with $3 \le n+1\le 7$.  Irie, Marques, and Neves \cite{IMN} showed that, for a generic metric $g$, the union of all minimal surfaces in $M$ is dense in $M$. In particular, this proved Yau's conjecture for generic metrics.  Later Marques, Neves, and Song \cite{MNS} refined this result to show that, for a generic metric  $g$, there is a sequence of minimal surfaces in $M$ which becomes equidistributed in $M$. The Weyl law for the volume spectrum was a key ingredient in the proof of both of these results. Following the second named author's proof of the Multiplicity One Conjecture \cite{Z}, Marques and Neves \cite{MN2} showed that, for a generic metric $g$, there is a sequence of minimal surfaces $\Sigma_p$ with index $p$ and $\area(\Sigma_p) = \omega_p$. Song \cite{Song} proved Yau's conjecture for arbitrary metrics $g$.

The Almgren-Pitts min-max theory relies heavily on tools from geometric measure theory.  There is a parallel min-max theory for finding minimal surfaces based on the theory of phase transitions.   This theory relies on the Allen-Cahn PDE and the varifold regularity theory of Wickramasekera. Gaspar and Guaraco \cite{GG2} defined a phase transition spectrum $\{c(p)\}_{p\in \N}$ associated to a Riemannian manifold via the Allen-Cahn PDE. They showed that each $c(p)$ is achieved by a collection of minimal surfaces with multiplicities.  Chodosh and Mantoulidis \cite{CM} proved the Multiplicity One Conjecture in the phase transition setting in ambient dimension three.  Thus, for generic metrics on $M^3$, they obtained the existence of a sequence of minimal surfaces $\Sigma_p$ with index $p$ and $\area(\Sigma_p) = c(p)$.

Gaspar and Guaraco \cite{GG} showed that the phase transition spectrum also satisfies a Weyl law. That this, there is a constant $\tau_n$ depending only on the dimension such that 
\[
c(p) \sim \tau_n \vol(M)^{\frac{n}{n+1}}p^{\frac{1}{n+1}}. 
\]
Dey \cite{Dey2} proved that actually $\omega_p = c(p)$ for all $p\in \N$ and thus the Almgren-Pitts volume spectrum and the phase transition volume spectrum coincide. In particular, the constants $a_n$ and $\tau_n$ appearing in the two Weyl laws are equal. 

In this paper, we define a ``half-volume'' spectrum associated to a Riemannian manifold.  In the Almgren-Pitts setting, we restrict to $p$-sweepouts by families of hypersurfaces that each enclose half the volume of $M$.  Then we define 
\[
\tilde \omega_p = \inf_{\text{half-volume $p$-sweepouts }X} \left[\sup_{\Sigma \in X} \area(\Sigma)\right]. 
\] 
The sequence $\{\tilde \omega_p\}_{p\in \N}$ is called the half-volume spectrum of $M$. 
In the phase transition setting, we define an analogous half-volume spectrum $\tilde c(p)$ by looking at critical points of the Allen-Cahn energy subject to the volume constraint $\int_M u = 0$. In both cases, we show that the Weyl law continues to hold. This gives the following theorems. 

\begin{theorem}
\label{wl1}
The Almgren-Pitts half-volume spectrum satisfies 
\[
\tilde \omega_p \sim a_n \vol(M)^{\frac{n}{n+1}}p^{\frac{1}{n+1}}, \quad \text{as } p\to\infty.
\]
\end{theorem}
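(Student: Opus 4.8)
The plan is to squeeze $\tilde\omega_p$ between $\omega_p$ from below and a constructed competitor from above. The lower bound is essentially free: every half-volume $p$-sweepout is in particular a $p$-sweepout of $M$, since the inclusion of the space of hypersurfaces enclosing half of $\vol(M)$ into the full space $\mathcal{Z}_n(M;\Z_2)$ of mod-$2$ cycles pulls the fundamental class $\lambda$ back to its half-volume analogue $\tilde\lambda$ --- both being the first Stiefel--Whitney class of the ``choice of enclosed region'' double cover, which stays nontrivial over the half-volume locus because the space of half-volume regions is connected. Thus the same family of slices realizes $\tilde\omega_p\ge\omega_p$, and $\liminf_{p\to\infty}\tilde\omega_p\,p^{-1/(n+1)}\ge a_n\vol(M)^{n/(n+1)}$ follows from the Liokumovich--Marques--Neves Weyl law \cite{LMN}. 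All the work is in the matching upper bound.

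For the upper bound, fix $\eps>0$; I would modify the \cite{LMN} construction by a \emph{median trick}. Recall the Weyl-law upper bound for $\omega_p$ is obtained by covering $M$ by finitely many disjoint small geodesic balls $B_1,\dots,B_K$ with $\vol(M\setminus\bigcup_i B_i)<\eps$, taking on each $B_i$ a near-optimal sweepout $[a]\in\RP^{p_i}\mapsto\partial(\{x\in B_i:f_a(x)<0\})$ with $f_a=\sum_j a_j\phi^{(i)}_j$ ranging over a $(p_i{+}1)$-dimensional function space on $B_i$ (Laplace eigenfunctions, or polynomials in a chart; cf.\ \cite{Gu}), and gluing these into a $p$-sweepout of $M$, $p\approx\sum_i p_i$, of total mass $\le(1+\eps)a_n\vol(M)^{n/(n+1)}p^{1/(n+1)}$ by a Lusternik--Schnirelmann cup-length argument. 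To make every slice enclose exactly half the volume, replace the zero level set by the median one. First arrange that no $f_a$ with $a\ne0$ is constant (for eigenfunctions, drop the constant eigenfunction, lowering each $p_i$ by one --- negligible). Let $m_i(a)$ be the unique number with $\vol(\{x\in B_i:f_a(x)<m_i(a)\})=\tfrac12\vol(B_i)$; as $f_a$ is nonconstant real-analytic, its distribution function is continuous and strictly increasing near the median, so $m_i$ is continuous on $\RP^{p_i}$ and odd in $a$, and $\widetilde\Phi_i([a]):=\partial(\{x\in B_i:f_a(x)<m_i(a)\})$ is a well-defined continuous family of hypersurfaces of $B_i$ each enclosing $\tfrac12\vol(B_i)$. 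It is homotopic within $\mathcal{Z}_n(B_i;\Z_2)$ to the original family through $\{f_a<t\,m_i(a)\}$, $t\in[0,1]$, so it still pulls $\lambda$ back to the generator of $H^1(\RP^{p_i};\Z_2)$; and its mass obeys the same bound, since the integral-geometric estimate for $\mathcal{H}^n(\{f_a=c\}\cap B_i)$ (intersection with generic geodesics) is uniform in $c$. So $\widetilde\Phi_i$ is a near-optimal $p_i$-sweepout of $B_i$ with bisecting slices.

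Gluing $\widetilde\Phi_1,\dots,\widetilde\Phi_K$ by the Lusternik--Schnirelmann mechanism of \cite{LMN} yields $\widetilde\Phi\colon X\to\mathcal{B}_n(M;\Z_2)$ with $\widetilde\Phi^*\lambda^p\ne0$ and $\sup_X\mathbf{M}\circ\widetilde\Phi\le(1+\eps)a_n\vol(M)^{n/(n+1)}p^{1/(n+1)}$; as each slice bisects each $B_i$ and enclosed volumes add over the $B_i$, every slice of $\widetilde\Phi$ bounds a region of volume $\tfrac12\sum_i\vol(B_i)$, within $\tfrac12\eps$ of $\tfrac12\vol(M)$. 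I would remove this deficit either by sweeping the leftover region $M\setminus\bigcup_iB_i$ by the same construction (arranging the packing to leave none), or by fixing one bisecting hypersurface of the leftover region and replacing each enclosed region by its symmetric difference with that reference region; either way every slice then encloses exactly $\tfrac12\vol(M)$. Since $\widetilde\Phi^*\lambda^p\ne0$ and restriction to the half-volume locus sends $\lambda$ to $\tilde\lambda$, $\widetilde\Phi$ is a half-volume $p$-sweepout, so $\tilde\omega_p\le(1+\eps)a_n\vol(M)^{n/(n+1)}p^{1/(n+1)}$; letting $p\to\infty$ and then $\eps\to0$ finishes the proof.

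The hard part will be compatibility. The median adjustment is nonlocal on each ball, so I must check it coexists with the relative-cycle boundary matching and the continuity / no-mass-concentration requirements of the discretization and gluing theorems of \cite{LMN}, and that the leftover region can genuinely be absorbed so that the \emph{exact} half-volume constraint holds globally. The delicate quantitative point is the sharp constant: getting $a_n$, not merely the correct power $p^{1/(n+1)}$ (which the above yields easily), requires carrying the median modification through whatever refined local construction \cite{LMN} use to attain $a_n$, i.e.\ verifying that the area bounds for median level sets are asymptotically no worse than for zero level sets --- true by level-uniformity, but to be matched to their precise form. The degeneracy of the median where $f_a$ is constant is by comparison cheap to handle, via the ``no constant functions'' device, at the cost of an $O(1)$ shift in $p$ invisible to the asymptotics.
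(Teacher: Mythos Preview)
Your lower bound matches the paper's exactly. For the upper bound, however, you are working much harder than necessary, and the parts you flag as ``hard'' are genuinely hard and not carried out; the paper bypasses them entirely with a one-line idea you did not consider.

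The paper does \emph{not} modify the \cite{LMN} construction. Instead it observes that there is a deformation retraction $\theta\colon \B(M,\Z_2)\times[0,1]\to\B(M,\Z_2)$ onto the half-volume cycle space $\h(M,\Z_2)$ which increases mass by at most a \emph{fixed constant} $K$ depending only on $M$. Concretely: fix once and for all a Morse function $f$ on $M$, let $K=\max_t\area(\{f=t\})$, and for $\Omega\in\C(M)$ with $\vol(\Omega)\le\hv$ send $\Omega\mapsto\Omega\cup B_s$ where $B_s=\{f<s\}$ and $s$ is chosen so that $\vol(\Omega\cup B_s)=\hv$; extend to $\vol(\Omega)\ge\hv$ by complementation. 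Then $\area(\partial(\Omega\cup B_s))\le\area(\partial\Omega)+\area(\partial B_s)\le\area(\partial\Omega)+K$. Given any $p$-sweepout $\Phi$ with $\sup_x\area(\Phi(x))\le\omega_p+1$, the composition $\Psi(x)=\theta(\Phi(x),1)$ is a half-volume $p$-sweepout (since $\theta(\cdot,1)$ is homotopic to the identity, $\Psi^*\lambda^p=\Phi^*\lambda^p\neq0$) with $\sup_x\area(\Psi(x))\le\omega_p+K+1$. Hence
\[
\omega_p\le\tilde\omega_p\le\omega_p+K+1,
\]
and the Weyl law for $\tilde\omega_p$ follows immediately from the Weyl law for $\omega_p$, with the \emph{same} constant $a_n$, because the additive $O(1)$ is killed by $p^{1/(n+1)}\to\infty$.

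What this buys over your approach: you never touch the internals of the \cite{LMN} construction, you never need level-uniform area bounds for median level sets, you never face the boundary-matching and discretization compatibility issues you correctly identify as delicate, and the exact half-volume constraint holds on the nose by construction with no leftover region to absorb. The only nontrivial point is continuity of $\theta$ in the flat topology (since $s=s(\Omega,t)$ need not be unique or continuous), which the paper checks directly; this is a short computation, far easier than any of the ``hard parts'' you list. Your median trick is not wrong in principle, but it is an order of magnitude more work for the same conclusion, and as written it is only a program, not a proof.
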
 

\begin{theorem}
\label{wl2}
The phase transition half-volume spectrum satisfies 
\[
\tilde c(p) \sim \tau_n \vol(M)^{\frac{n}{n+1}}p^{\frac{1}{n+1}}, \quad \text{as } p\to\infty.
\]
\end{theorem}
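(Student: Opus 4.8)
The plan is to deduce the half-volume phase transition Weyl law from the two Weyl laws already at our disposal — the Almgren--Pitts half-volume Weyl law (Theorem \ref{wl1}, already proved earlier in the paper) and the comparison between Allen--Cahn and Almgren--Pitts min-max values in the style of Dey \cite{Dey2} and Gaspar--Guaraco \cite{GG} — adapted to the constrained setting. Concretely, I would first establish that $\tilde c(p)$, defined via critical points of the Allen--Cahn energy $E_\varepsilon(u) = \int_M \frac{\varepsilon}{2}|\nabla u|^2 + \frac{1}{\varepsilon}W(u)$ over $(p+1)$-parameter families in the constraint set $\{u : \int_M u = 0\}$, is sandwiched between (constant multiples of) $\tilde\omega_p$ from above and below, with the constants tending to the same limit $2W_0 = \tau_n / a_n$ (the usual normalization relating Allen--Cahn energy to area). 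The point is that the volume constraint $\int_M u = 0$ on the Allen--Cahn side corresponds precisely, in the sharp-interface limit, to the enclosed-half-volume constraint on the Almgren--Pitts side, so the two constrained spectra should be asymptotically proportional by the same factor as in the unconstrained case.

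The key steps, in order, are: (1) Set up the constrained Allen--Cahn min-max: fix the Hilbert manifold $\mathcal{H} = \{u \in W^{1,2}(M) : \int_M u = 0\}$, let $\mathbb{B}^{p+1}$-type admissible families be $(p+1)$-sweepouts in $\mathcal{H}$ detecting the cup-length of (the relevant quotient of) $\mathcal{H}$, and define $\tilde c_\varepsilon(p)$ as the min-max value; then $\tilde c(p) := \limsup_{\varepsilon \to 0} \tilde c_\varepsilon(p) / (2W_0)$ (or the appropriate normalization the paper has fixed). (2) Upper bound: given a nearly optimal half-volume $p$-sweepout on the Almgren--Pitts side, convert it to a family in $\mathcal{H}$ by replacing each hypersurface enclosing half the volume with a suitable Allen--Cahn profile (a mollified signed-distance-type function) whose average is $0$; the energy of each slice converges to $2W_0$ times its area, giving $\limsup \tilde c(p) \le \liminf$ along the relevant scale, hence $\tilde c(p) \le \tilde\omega_p$ asymptotically — and combined with Theorem \ref{wl1} this yields the upper half of the Weyl law. (3) Lower bound: given a nearly optimal constrained Allen--Cahn family, pass to the limit using the varifold convergence / BV compactness machinery (Hutchinson--Tonegawa, Wickramasekera regularity as used by Guaraco and Gaspar--Guaraco) to extract a family of hypersurfaces; the constraint $\int_M u = 0$ passes to the limit as the enclosed region having half the volume (up to the multiplicity-even minimal pieces, which carry no signed volume), producing a half-volume $p$-sweepout on the Almgren--Pitts side with area $\le \tilde c(p)/(2W_0)$ asymptotically, hence $\tilde\omega_p \le \tilde c(p)$ asymptotically. (4) Combine (2), (3), and Theorem \ref{wl1} to conclude $\tilde c(p) \sim \tau_n \vol(M)^{n/(n+1)} p^{1/(n+1)}$, using $\tau_n = 2W_0 \, a_n$ (the identity from Dey and Gaspar--Guaraco).

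The main obstacle I expect is step (3): making the volume constraint pass cleanly to the sharp-interface limit. In the unconstrained setting one only needs varifold convergence of the energy measures, but here one must also control the convergence of the functions $u_\varepsilon$ in $BV$ (or $L^1$) so that $\int_M u_\varepsilon = 0$ forces the limiting Caccioppoli set to have measure exactly $\frac{1}{2}\vol(M)$ — and one must ensure this is compatible with the min-max family structure (i.e. that the constraint is preserved along the whole sweepout, not just at the max slice, and that the ``even multiplicity'' minimal components genuinely contribute zero signed volume). A secondary technical point is verifying that the constrained admissible families on the two sides have matching topological complexity (the cohomological/Lusternik--Schnirelmann dimension count), so that the $p$-th constrained widths correspond; this should follow from the fact that the constraint set $\mathcal{H}$ and the space of half-volume Caccioppoli sets are both, up to homotopy, governed by the same $\mathbb{RP}^\infty$-type classifying space as in the unconstrained theory, but it requires care. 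Finally, I would remark that an alternative, possibly cleaner route is to directly mimic the Liokumovich--Marques--Neves \cite{LMN} and Gaspar--Guaraco \cite{GG} arguments in the constrained setting, proving the Weyl law for $\tilde c(p)$ from scratch via the Lusternik--Schnirelmann inequality and a domain-decomposition argument — but since Theorem \ref{wl1} is already available, the comparison route above is shorter.
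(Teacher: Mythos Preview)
Your route is genuinely different from the paper's, and considerably harder. The paper never compares $\tilde c(p)$ to $\tilde\omega_p$ at all; instead it sandwiches $\tilde c(p)$ between consecutive terms of the \emph{unconstrained} phase transition spectrum:
\[
c(p)\;\le\;\tilde c(p)\;\le\;c(p+1).
\]
The left inequality is trivial ($\mathcal G_p\subset\mathcal F_p$). For the right inequality, given $A\in\mathcal F_{p+1}$ one sets $B=\{u\in A:\int_M u=0\}$ and shows $\ind_{\Z_2}(B)\ge p+1$ using only continuity, subadditivity, and monotonicity of the $\Z_2$-index (the set $\{u\in A:|\int_M u|\ge\eta/2\}$ admits an equivariant map to $S^0$, hence has index $\le 1$). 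Then Gaspar--Guaraco's Weyl law for $c(p)$ finishes the job. No sharp-interface limits, no Dey-type comparison, no appeal to Theorem~\ref{wl1}.

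Your step (3) is where the real difficulty in your plan lies, and as written it has a gap. Hutchinson--Tonegawa and Wickramasekera regularity give varifold/BV convergence for \emph{individual critical points}, not for an arbitrary continuous family of $W^{1,2}$ functions. To get a lower bound $\tilde\omega_p\le\tilde c(p)$ you would need to convert a whole compact invariant set $A\in\mathcal G_p$ into a half-volume $p$-sweepout with controlled mass --- i.e.\ you would have to adapt Dey's interpolation construction (which is already delicate in the unconstrained case) to the volume-constrained setting, verifying that the constraint $\int_M u=0$ survives the interpolation to produce half-volume cycles, and that the topological nontriviality (the cup-length condition) is preserved. This is a substantial undertaking that you have not outlined. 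Your step (2) is more plausible, but also requires care: building Allen--Cahn profiles from a half-volume sweepout so that $\int_M u$ is \emph{exactly} zero (not just approximately) and so that the resulting family lies in $Y\setminus\{0\}$ with the correct $\Z_2$-index. None of this is impossible, but the paper's one-paragraph index argument bypasses all of it.
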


 In the Allen-Cahn setting, we are able to use the results of Bellettini and Wickramasekera \cite{BelWic} to find varifolds achieving each $\tilde c(p)$.  In the following theorem, a hypersurface $\Sigma$ is called almost-embedded if near each point in $M$ either $\Sigma$ is embedded or $\Sigma$ decomposes into an ordered union of embedded sheets.   
 
\begin{theorem}
\label{main}
Let $(M^{n+1},g)$ be a closed Riemannian manifold with $3\le n+1 \le 7$. Fix a number $p\in \N$. There are
\begin{itemize}
\item[(i)] a Caccioppoli set $\Omega\subset M$ with $\vol(\Omega) = \frac 1 2 \vol(M)$ whose boundary is smooth and almost-embedded with constant mean curvature,
\item[(ii)]  a (possibly empty) collection of smooth, disjoint minimal surfaces $\Sigma_1,\hdots,\Sigma_k \subset M \setminus \Omega$, 
\item[(iii)] and positive integers $\theta_0 \in \Z$ and $\theta_1,\hdots,\theta_k\in 2\Z$ 
\end{itemize}
such that
$
\tilde c(p) = \theta_0\area(\bd \Omega) + \theta_1\area(\Sigma_1) + \hdots + \theta_k\area(\Sigma_k).
$
Moreover, $\theta_0 = 1$ unless $\bd \Omega$ is also a minimal surface. 
\end{theorem}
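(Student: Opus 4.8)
The plan is to run a constrained Allen-Cahn min-max and then extract the geometry of the resulting critical points from the regularity theory of Bellettini-Wickramasekera \cite{BelWic}. \textbf{Step 1: constrained min-max.} For $\eps>0$ let $E_\eps(u)=\int_M\left(\tfrac{\eps}{2}|\grad u|^2+\tfrac1\eps W(u)\right)$ be the Allen-Cahn energy and let $\mathcal C=\{u\in W^{1,2}(M):\int_M u=0\}$, a smooth codimension-one Banach submanifold of $W^{1,2}(M)$ encoding the half-volume condition. Running the $p$-parameter min-max scheme of Gaspar-Guaraco \cite{GG2} for $E_\eps$ relative to $\mathcal C$ produces, for each small $\eps$, a critical point $u_\eps\in\mathcal C$ of $E_\eps|_{\mathcal C}$ whose normalized energy $\tfrac1{2\sigma}E_\eps(u_\eps)$ converges to $\tilde c(p)$ (with $\sigma=\int_{-1}^1\sqrt{W(s)/2}\,ds$), with $E_\eps(u_\eps)$ bounded above uniformly in $\eps$ and with Morse index at most $p$; the index bound is obtained just as in the unconstrained case, now performed on the submanifold $\mathcal C$.

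\textbf{Step 2: Euler-Lagrange equation and the Lagrange multiplier.} Since the constraint functional $u\mapsto\int_M u$ has constant gradient, a constrained critical point satisfies
\[
\eps\,\lap u_\eps-\tfrac1\eps W'(u_\eps)=\ell_\eps
\]
for a constant $\ell_\eps\in\R$; that is, $u_\eps$ solves the inhomogeneous Allen-Cahn equation with constant prescribing term $\ell_\eps$. The key analytic point is a bound $|\ell_\eps|\le C$ uniform in $\eps$, obtained by testing the equation against suitable functions and combining the energy bound with the $L^1$-estimate $\int_M|u_\eps|\le\vol(M)$ and standard interior estimates for Allen-Cahn solutions. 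Passing to a subsequence, $\ell_\eps\to\ell_0$, and we set $h:=\ell_0/(2\sigma)$, the prospective constant mean curvature of the interface.

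\textbf{Step 3: passage to the limit.} By the convergence theory for the inhomogeneous Allen-Cahn equation with bounded prescribing function and bounded index \cite{BelWic}, after a further subsequence the measures $\tfrac1{2\sigma}\left(\tfrac\eps2|\grad u_\eps|^2+\tfrac1\eps W(u_\eps)\right)dV$ converge to an integral varifold $V$ with the generalized mean curvature of a prescribed-$h$ surface; the subregions $\{u_\eps>0\}$ converge in $L^1$ and as Caccioppoli sets to a set $\Omega\subset M$, and the constraint passes to the limit, so $\vol(\Omega)=\tfrac12\vol(M)$; and, in ambient dimension $3\le n+1\le7$, the structure theorem decomposes $V=\theta_0|\bd\Omega|+\sum_{i=1}^k\theta_i|\Sigma_i|$, where $\bd\Omega$ is smooth, almost-embedded, of constant mean curvature $h$ with multiplicity $\theta_0=1$ when $h\neq0$, and the $\Sigma_i$ are pairwise disjoint smooth closed minimal hypersurfaces lying in $M\setminus\Omega$ with even multiplicities $\theta_i\in2\Z$. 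When $h=0$ the interface $\bd\Omega$ is itself minimal and no multiplicity-one constraint on it is available, which is the stated exception. The no-energy-loss part of \cite{BelWic} then gives $\tilde c(p)=\theta_0\area(\bd\Omega)+\sum_i\theta_i\area(\Sigma_i)$. Heuristically, $\theta_i\in2\Z$ because away from $\bd\Omega$ the surrounding phase is constant, so the interfaces $\{u_\eps=0\}$ accumulating on $\Sigma_i$ come in transition/re-transition pairs whose prescribed mean curvatures point oppositely relative to the collapsing slab and cancel; this is the ``hidden boundary'' part of the structure theorem, consistent with the BV-convergence $\chi_{\{u_\eps>0\}}\to\chi_\Omega$, which assigns all of the net, odd-multiplicity part of $V$ to $\bd\Omega$.

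I expect the main obstacle to be the coupled analytic facts in Steps 2 and 3: establishing the \emph{uniform} bound on the Lagrange multiplier $\ell_\eps$, and then showing its limit is realized as the mean curvature of the multiplicity-one interface $\bd\Omega$ rather than being smeared over the whole limit varifold --- equivalently, that $\bd\Omega$ absorbs all the net phase transition while the minimal remainder stays even. A secondary technical point is ensuring the min-max index bound survives the constraint and is in the form required by \cite{BelWic} (reducing, if needed, to stable regularity away from finitely many points and then removing the singularities).
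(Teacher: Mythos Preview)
Your proposal is correct and follows essentially the same route as the paper: constrained Allen--Cahn min-max producing critical points with Lagrange multipliers, uniform bounds on those multipliers, and then the Bellettini--Wickramasekera structure theorem. The obstacles you flag at the end are exactly where the work lies; note that you also need a uniform $L^\infty$ bound on $u_\eps$ (which you do not mention but is required to invoke \cite{BelWic}), and that the paper handles the constrained Palais--Smale condition, the multiplier bound, and this $L^\infty$ bound by imposing an additional polynomial growth hypothesis on $W$ and adapting an argument of Chen in the appendix.
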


\noindent Note that Theorem 3 produces a constant mean curvature surface that encloses half the volume of $M$.  Previously, the second author and Zhu \cite{ZZ} developed a min-max theory in the Almgren-Pitts setting capable of finding surfaces of constant mean curvature $c$. However, there is no control over the volume enclosed by the surface. Likewise, Bellettini and Wickramasekera \cite{BelWic} developed a min-max theory in the Allen-Cahn setting capable of finding surfaces of constant mean curvature $c$.  Again there is no control over the volume enclosed by the surface.  Thus there is a trade off.  Theorem \ref{main} produces constant mean curvature surfaces enclosing half the volume of $M$, but at the expense of losing control over the exact value of the mean curvature.

We conclude the introduction with some open problems. First, we conjecture that $\tilde \omega_p = \tilde c(p)$ for all $p\in \N$. Second, we conjecture that, for a generic metric $g$, the phase transition half-volume spectrum is achieved by multiplicity one constant mean curvature surfaces enclosing half the volume of $M$. In other words, in Theorem \ref{main} the collection $\Sigma_1,\hdots,\Sigma_k$ is empty and $\theta_0 = 1$ for every $p$.  In particular, we conjecture that generically there are infinitely many constant mean curvature surfaces enclosing half the volume of $M$.  Finally, it is interesting to know whether one can find surfaces achieving $\tilde \omega_p$ by applying the Almgren-Pitts min-max theory with a volume constraint.  This seems to be a difficult task. Already it is not obvious how to define a suitable pull-tight on the space of half-volume cycles. 

\subsection*{Acknowledgement} X.Z. was supported by NSF grant DMS-2243149,  and an Alfred P. Sloan Research Fellowship.

\section{The Almgren-Pitts Half-Volume Spectrum} 
\label{ap} In this section, we investigate the topology of the space of half-volume cycles in a given manifold, and then define the Almgren-Pitts half-volume spectrum.  Let $(M^{n+1},g)$ be a closed Riemannian manifold.  We will use the following notation. 
\begin{itemize}
\item Let $\mathfrak h = \frac{1}{2}\vol M$. 
\item Let $\mathcal C(M)$ denote the collection of all Caccioppoli sets in $M$.  
\item Let $\mathcal C_{\hv}(M)$, $\mathcal C_{\ge \hv}(M)$, and $\mathcal C_{\le \hv}(M)$ denote the space of Caccioppoli sets with volume equal to $\hv$, greater than or equal to $\hv$, and less than or equal to $\hv$, respectively. 
\item Let $\z(M,\Z_2)$ denote the set of all $n$-dimensional flat chains mod 2 in $M$. 
\item Let $\B(M,\Z_2)$ denote the set of all $T\in \z(M,\Z_2)$ such that $T = \bd \Omega$ for some $\Omega\in \C(M)$. This is the connected component of the empty set in $\z(M,\Z_2)$ in the flat topology. 
\item Let $\h(M,\Z_2)$ be the set of all $T \in \z(M,\Z_2)$ such that $T = \bd \Omega$ for some $\Omega\in \C_{\hv}(M)$.  This is the space of ``half-volume cycles.'' 
\item We use $\mathcal F$ to denote the flat topology, $\mathbf F$ to denote the $\mathbf F$-topology, and $\mathbf M$ to denote the mass topology.  All spaces are assumed to be equipped with the flat topology except where otherwise noted. 
\item We will abuse notation and write $\vol(\Omega)$ and $\area(T)$ instead of $\mathbf M(\Omega)$ and $\mathbf M(T)$ for $\Omega\in \mathcal C(M)$ and $T\in \z(M,\Z_2)$, respectively. 
\end{itemize}

\noindent We will show that $\h(M,\Z_2)$ is weakly homotopy equivalent to $\RP^\infty$. The first step is to show that the double cover $\C_\hv(M)$ is contractible. 

\begin{prop}
The space $\C_{\le \hv}(M)$ deformation retracts to $\C_\hv(M)$.
\label{defret}
\end{prop}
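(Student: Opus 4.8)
\emph{Plan.} The plan is to write down an explicit strong deformation retraction that enlarges each Caccioppoli set of volume at most $\hv$ to one of volume exactly $\hv$, using the sublevel sets of a fixed Morse function as ``filling material.'' Fix a Morse function $f\colon M\to\R$; since $f$ has only finitely many critical points, every level set $f^{-1}(s)$ has $(n+1)$-dimensional measure zero, so $s\mapsto\vol(\{f\le s\})$ is continuous. Given $\Omega\in\C_{\le\hv}(M)$ with $v:=\vol(\Omega)\le\hv$, I would set $u_\Omega(s)=\vol(\{f\le s\}\setminus\Omega)$ for $s\in[\min f,\max f]$; this is continuous and non-decreasing, with $u_\Omega(\min f)=0$ (the minimum set of $f$ is finite) and $u_\Omega(\max f)=\vol M-v\ge\hv-v$. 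Letting $s_\Omega(t)=\inf\{\,s\in[\min f,\max f]: u_\Omega(s)\ge(\hv-v)t\,\}$, so that $u_\Omega(s_\Omega(t))=(\hv-v)t$ by the intermediate value theorem, I would define
\[
H(\Omega,t)=\Omega\cup\{f\le s_\Omega(t)\},\qquad t\in[0,1].
\]
Then $\vol(H(\Omega,t))=v+(\hv-v)t\in[v,\hv]$, so $H$ takes values in $\C_{\le\hv}(M)$; moreover $H(\cdot,0)$ is the identity (as $\{f\le\min f\}$ is null), $H(\cdot,1)$ lands in $\C_\hv(M)$, and $H(\Omega,\cdot)\equiv\Omega$ whenever $\vol(\Omega)=\hv$. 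So, granting continuity, $H$ is the required strong deformation retraction.

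Everything then reduces to the continuity of $H\colon\C_{\le\hv}(M)\times[0,1]\to\C_{\le\hv}(M)$. First I would note that on $\C(M)$ the flat topology is nothing but $L^1$-convergence of indicator functions: since $M$ has dimension $n+1$ there are no $(n+2)$-chains, so $\mathcal F(\Omega_1-\Omega_2)=\mass(\Omega_1-\Omega_2)=\vol(\Omega_1\triangle\Omega_2)$. In particular $\Omega\mapsto\vol(\Omega)$ is continuous and $s\mapsto\{f\le s\}$ is continuous into $\C(M)$. Now suppose $\Omega_k\to\Omega$ and $t_k\to t$; write $v_k=\vol(\Omega_k)\to v$, $w_k=(\hv-v_k)t_k\to(\hv-v)t=:w$, and $s_k=s_{\Omega_k}(t_k)$, so that $u_{\Omega_k}(s_k)=w_k$. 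Passing to a subsequence with $s_k\to s^*$, I get $\{f\le s_k\}\to\{f\le s^*\}$, hence $\{f\le s_k\}\setminus\Omega_k\to\{f\le s^*\}\setminus\Omega$ in $L^1$ (a product of uniformly bounded $L^1$-convergent indicators), so $w_k=u_{\Omega_k}(s_k)\to u_\Omega(s^*)$ and therefore $u_\Omega(s^*)=w$.

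The main obstacle is that $s_\Omega(t)$ need not vary continuously with $\Omega$: the equation $u_\Omega(s)=w$ can have an entire interval of solutions, so the limit $s^*$ above may strictly exceed $s_\Omega(t)$, and a direct argument breaks down here. The fix will be to observe that this overshoot is harmless. If $s^*>s_\Omega(t)$, then $u_\Omega$ is constant on $[s_\Omega(t),s^*]$, which means $\{s_\Omega(t)<f\le s^*\}\subseteq\Omega$ up to a null set, so $\Omega\cup\{f\le s^*\}=\Omega\cup\{f\le s_\Omega(t)\}=H(\Omega,t)$ as Caccioppoli sets. In either case $H(\Omega_k,t_k)=\Omega_k\cup\{f\le s_k\}\to\Omega\cup\{f\le s^*\}=H(\Omega,t)$ along the subsequence; since every subsequence of $(\Omega_k,t_k)$ admits a further subsequence along which $H\to H(\Omega,t)$, continuity follows. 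The remaining verifications (that $u_\Omega$ is continuous, that $H$ fixes $\C_\hv(M)$, and the edge cases $t=0,1$) are routine.
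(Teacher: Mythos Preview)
Your proposal is correct and takes essentially the same approach as the paper: both construct the deformation retraction by unioning $\Omega$ with sublevel sets of a fixed Morse function so as to interpolate the volume linearly to $\hv$, and both recognize that the chosen level $s$ need not depend continuously on $(\Omega,t)$ while the resulting set does. The only difference is in how continuity is verified---the paper carries out a direct $\eps$--$\delta$ estimate on $\vol\big(\phi(\Omega,t)\triangle\phi(\Theta,r)\big)$, whereas you use a subsequential compactness argument together with the clean observation that any ``overshoot'' $s^*>s_\Omega(t)$ satisfies $\{s_\Omega(t)<f\le s^*\}\subset\Omega$ up to a null set and is therefore harmless; both arguments are valid.
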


\begin{proof}
The union of two Caccioppoli sets is a Caccioppoli set.  Choose a Morse function $f\f M\to \R$.  For $s\in [0,\hv]$, let $B_s$ be the sublevel set of $f$ with volume equal to $s$. Note that each $B_s$ is a Caccioppoli set.  For each $\Omega \in \C_{\le \hv}(M)$ and $t\in [0,1]$, choose a number $s(\Omega,t)$ so that
\[
\vol(\Omega \cup B_{s(\Omega,t)}) = \vol(\Omega) + t(\hv - \vol(\Omega)).
\]
Note that there is not necessarily a unique choice for $s(\Omega,t)$, and the mapping $(\Omega,t)\mapsto s(\Omega,t)$ may not be continuous. Nevertheless, we claim that the map $\phi\f \C_{\le \hv}(M)\times [0,1]\to \C_{\le \hv}(M)$ defined by 
\[
\phi(\Omega,t) = \Omega \cup B_{s(\Omega,t)}
\]
is continuous in the flat topology. Given this claim, $\phi$ is the required deformation retraction.  Indeed, $\phi(\Omega,t) = \Omega$ for all $\Omega\in \C_\hv(M)$ and all $t\in [0,1]$, and moreover, $\phi(\Omega,1) \in \mathcal C_\hv(M)$ for all $\Omega\in \C_{\le \hv}(M)$.

To see that $\phi$ is continuous, let $\eps,\eta > 0$ be small positive numbers. Assume that $\Omega,\Theta \in \C_{\le \hv}(M)$ satisfy $\vol(\Omega \sd \Theta) < \eps$ and that $t,r\in [0,1]$ satisfy $\vert t-r\vert < \eta$. We need to check that 
$
\vol\big(\phi(\Omega,t)\operatorname{\Delta} \phi(\Theta,r)\big)
$
is small. Let $U = B_{s(\Omega,t)}\setminus \Omega$ and let $V = B_{s(\Theta,r)}\setminus \Theta$.  Now observe that 
\begin{align*}
\vol\big(\phi(\Omega,t)\operatorname{\Delta} \phi(\Theta,r)\big) &= \vol\big([\Omega\cup U] \sd [\Theta\cup V]\big)\\ &= \vol(\Omega) + \vol(U) + \vol(\Theta) + \vol(V) \\
&\quad - 2\vol(\Omega\cap \Theta) - 2\vol(\Omega \cap V) - 2 \vol(U\cap \Theta) - 2\vol(U\cap V).
\end{align*}
Note that $\vol(\Omega) + \vol(\Theta) - 2\vol(\Omega\cap \Theta) =  \vol(\Omega \sd \Theta) < \eps$. We also have $\vol(\Omega \cap V) \le \vol(\Omega\setminus \Theta) < \eps$ and $\vol(U\cap \Theta) \le \vol(\Theta\setminus \Omega) < \eps$. 

Therefore, it remains to show that 
$
\vol(U) + \vol(V) - 2\vol(U\cap V)
$
is small.  Observe that $\vol(U) - \vol(U\cap V) = \vol(U\setminus V)$ and that $\vol(V) - \vol(U\cap V) = \vol(V\setminus U)$.  Without loss of generality, we can assume that $s(\Theta,r) \ge s(\Omega,t)$.  In this case, we have 
\begin{align*}
\vol(U\setminus V) &= \vol\big([B_{s(\Omega,t)}\setminus \Omega] \setminus [B_{s(\Theta,r)}\setminus \Theta]\big)\\
&\le \vol\big([B_{s(\Theta,r)}\setminus \Omega] \setminus [B_{s(\Theta,r)}\setminus \Theta]\big) \le \vol(\Theta \setminus \Omega) < \eps. 
\end{align*}
We also have 
\begin{align*}
\vol(V\setminus U) &= \vol\big([B_{s(\Theta,r)}\setminus \Theta] \setminus [B_{s(\Omega,t)}\setminus \Omega]\big)\\
&= \vol\big([B_{s(\Theta,r)}\setminus B_{s(\Omega,t)}]\setminus \Theta\big)+ \vol\big([B_{s(\Omega,t)}\setminus \Theta] \setminus [B_{s(\Omega,t)}\setminus \Omega]\big)\\
&\le \vol\big([B_{s(\Theta,r)}\setminus B_{s(\Omega,t)}]\setminus \Theta\big) + \vol(\Omega\setminus \Theta) \\
&< \vol\big([B_{s(\Theta,r)}\setminus B_{s(\Omega,t)}]\setminus \Theta\big) + \eps. 
\end{align*}
Thus to prove the claim, it remains to show that $\vol\big([B_{s(\Theta,r)}\setminus B_{s(\Omega,t)}]\setminus \Theta\big)$ is small.  Here we must use the assumption that $t$ is close to $r$. Notice that  
\begin{gather*}
r(\hv - \vol(\Theta)) = \vol(V) = \vol\big([B_{s(\Theta,r)}\setminus B_{s(\Omega,t)}]\setminus \Theta\big) + \vol(B_{s(\Omega,t)}\setminus \Theta), \\
t(\hv-\vol(\Omega)) = \vol(U) = \vol(B_{s(\Omega,t)}\setminus \Omega). 
\end{gather*}
Now observe that 
\begin{align*}
\vert r(\hv - \vol(\Theta)) - t(\hv-\vol(\Omega))\vert &\le  \vert t - r\vert \hv + t\vert \vol(\Omega)-\vol(\Theta)\vert+ \vert t-r\vert \vol(\Theta)\\
&\le 2\eta \hv + \eps. 
\end{align*}
Also we have 
\begin{align*}
\vert \vol(B_{s(\Omega,t)}\setminus \Theta) - \vol(B_{s(\Omega,t)}\setminus \Omega)\vert  &\le \vol \big([B_{s(\Omega,t)}\setminus \Theta] \sd [B_{s(\Omega,t)}\setminus \Omega]\big)\\
&\le \vol(\Theta\setminus \Omega) + \vol(\Omega\setminus \Theta) < \eps.
\end{align*}
This implies that 
\[
\vol\big([B_{s(\Theta,r)}\setminus B_{s(\Omega,t)}]\setminus \Theta\big) \le 2\eta \hv + 2\eps,
\]
which completes the proof of the claim. 
\end{proof}

\begin{prop}
The space $\C(M)$ deformation retracts to $\C_{\hv}(M)$, and the space $\B(M,\Z_2)$ deformation retracts to $\h(M,\Z_2)$.
\label{defret2}
\end{prop}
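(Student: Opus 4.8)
The plan is to bootstrap from Proposition~\ref{defret} by exploiting the symmetry $\Omega\mapsto M\setminus\Omega$. First I would note that $c(\Omega) = M\setminus\Omega$ defines a flat-topology homeomorphism of $\C_{\ge\hv}(M)$ onto $\C_{\le\hv}(M)$ that carries $\C_\hv(M)$ onto itself, so conjugating the deformation retraction $\phi$ of Proposition~\ref{defret} yields a deformation retraction $\psi(\Omega,t) := M\setminus\phi(M\setminus\Omega,t)$ of $\C_{\ge\hv}(M)$ onto $\C_\hv(M)$, which like $\phi$ is stationary on $\C_\hv(M)$. Since $\Omega\mapsto\vol(\Omega)$ is continuous in the flat topology, $\C_{\le\hv}(M)$ and $\C_{\ge\hv}(M)$ are closed subsets of $\C(M)$ which cover it and meet in $\C_\hv(M)$; as $\phi$ and $\psi$ both restrict to the identity on that intersection, the pasting lemma glues them into a continuous map $\Phi\colon\C(M)\times[0,1]\to\C(M)$. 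By construction $\Phi(\,\cdot\,,0)$ is the identity, $\Phi$ is stationary on $\C_\hv(M)$, and $\Phi(\,\cdot\,,1)$ takes values in $\C_\hv(M)$, so $\Phi$ is the desired deformation retraction.

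For the second statement I would descend $\Phi$ along the boundary map $\pi\colon\C(M)\to\B(M,\Z_2)$, $\pi(\Omega)=\bd\Omega$. Using connectedness of $M$, the fiber of $\pi$ over $\bd\Omega$ is exactly the pair $\{\Omega,\,M\setminus\Omega\}$, so $\pi$ is a double cover, hence an open continuous surjection; consequently $\pi\times\mathrm{id}_{[0,1]}$ is a quotient map. The key observation is that $\Phi$ is equivariant for the involution $\Omega\mapsto M\setminus\Omega$: if $\vol(\Omega)\le\hv$ then $\Phi(M\setminus\Omega,t)=\psi(M\setminus\Omega,t)=M\setminus\phi(\Omega,t)=M\setminus\Phi(\Omega,t)$, and the case $\vol(\Omega)\ge\hv$ is symmetric (the two cases agree when $\vol(\Omega)=\hv$ since $\Phi$ is stationary there). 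Therefore $\pi\circ\Phi$ is constant on the fibers of $\pi\times\mathrm{id}_{[0,1]}$ and factors through a continuous map $\tilde\Phi\colon\B(M,\Z_2)\times[0,1]\to\B(M,\Z_2)$ with $\tilde\Phi(\bd\Omega,t)=\bd\Phi(\Omega,t)$. Since $\pi(\C_\hv(M))=\h(M,\Z_2)$, the map $\tilde\Phi$ inherits from $\Phi$ all the properties of a deformation retraction of $\B(M,\Z_2)$ onto $\h(M,\Z_2)$.

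The only part requiring genuine care is the equivariance: it is precisely what dictates that $\Phi$ must be glued from $\phi$ and its conjugate $\psi$, rather than from two unrelated retractions of the two halves. Beyond that, the proof is the pasting lemma together with standard facts about the flat topology --- continuity of $\vol$ and of $\bd$, and the fact that $\bd\colon\C(M)\to\B(M,\Z_2)$ is a double cover --- so I do not anticipate a serious obstacle.
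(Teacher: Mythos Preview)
Your argument is correct and is essentially the same as the paper's: you extend the retraction $\phi$ of Proposition~\ref{defret} to all of $\C(M)$ by conjugating with $\Omega\mapsto M\setminus\Omega$ on the upper half, then use that the resulting map is odd to descend along the double cover $\bd$. The paper does exactly this (calling the glued map $\psi$ and the descended map $\theta$), only with fewer words about the pasting and the quotient-map descent; your added detail on these points is fine and does not change the approach.
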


\begin{proof}
Consider the deformation retraction $\phi\f \C_{\le \hv}(M)\times [0,1] \to \C_{\le \hv}(M)$ from the previous proposition. We can extend $\phi$ to an odd map $\psi\f \C(M) \times [0,1]\to \C(M)$ by the formula 
\[
\psi(\Omega,t) = \begin{cases}
\phi(\Omega,t), &\text{if } \vol(\Omega) \le \hv\\
M\setminus \phi(M\setminus \Omega,t), &\text{if } \vol(\Omega) \ge \hv.
\end{cases}
\]
Then $\psi$ is a deformation retraction of $\C(M)$ onto $\C_{\hv}(M)$.  Moreover, since $\psi$ is odd, this descends to a map $\theta\f \B(M,\Z_2) \times [0,1]\to \B(M,\Z_2)$.
This is the required deformation retraction of $\B(M,\Z_2)$ onto $\h(M,\Z_2)$. 
\end{proof}

\begin{prop}
\label{bound}
Let $K$ be the maximal area of a level set of the Morse function $f$ used in the proof of Proposition \ref{defret}.  Let $\theta$ be the deformation retraction from Proposition \ref{defret2}. Then $\area(\theta(T,t)) \le \area(T)+K$ for all $T\in \B(M,\Z_2)$ and all $t\in[0,1]$. 
\end{prop}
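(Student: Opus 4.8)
The plan is to unwind the construction of $\theta$ and reduce the estimate to a standard fact about perimeters. Recall that $\theta$ is the deformation retraction of $\B(M,\Z_2)$ onto $\h(M,\Z_2)$ induced by the odd extension $\psi\f \C(M)\times[0,1]\to\C(M)$ of $\phi$, and that $\psi(\Omega,t)=\phi(\Omega,t)=\Omega\cup B_{s(\Omega,t)}$ whenever $\vol(\Omega)\le\hv$. So the first move is a reduction to sublevel sets: given $T\in\B(M,\Z_2)$, choose $\Omega\in\C(M)$ with $\bd\Omega=T$, and if $\vol(\Omega)\ge\hv$ replace $\Omega$ by $M\setminus\Omega$. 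This is legitimate because $M$ is closed, so $\bd(M\setminus\Omega)=\bd\Omega=T$ in $\z(M,\Z_2)$, and because $\psi$ is odd, so $\bd\psi(M\setminus\Omega,t)=\bd\big(M\setminus\psi(\Omega,t)\big)=\bd\psi(\Omega,t)=\theta(T,t)$. Hence I may assume $\vol(\Omega)\le\hv$, and then $\theta(T,t)=\bd\phi(\Omega,t)=\bd\big(\Omega\cup B_{s(\Omega,t)}\big)$. The task becomes: bound $\area\big(\bd(\Omega\cup B_s)\big)$ in terms of $\area(\bd\Omega)$ and $K$.

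The key input is the subadditivity (indeed submodularity) of perimeter: for any two Caccioppoli sets $A,B\subset M$ one has $\mathbf M\big(\bd(A\cup B)\big)+\mathbf M\big(\bd(A\cap B)\big)\le \mathbf M(\bd A)+\mathbf M(\bd B)$, and in particular $\area\big(\bd(A\cup B)\big)\le\area(\bd A)+\area(\bd B)$. Applying this with $A=\Omega$ and $B=B_{s(\Omega,t)}$ gives $\area(\theta(T,t))\le\area(\bd\Omega)+\area\big(\bd B_{s(\Omega,t)}\big)=\area(T)+\area\big(\bd B_{s(\Omega,t)}\big)$, so it remains only to verify $\area(\bd B_s)\le K$ for every $s\in[0,\hv]$.

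For the last point: since $f$ is Morse, $c\mapsto\vol\{f<c\}$ is continuous and strictly increasing, so $B_s$ agrees up to a null set with the open sublevel set $\{f<c_s\}$ for a unique value $c_s$. The essential (hence reduced) boundary of $\{f<c_s\}$ is contained in its topological boundary, which lies in the level set $\{f=c_s\}$; therefore $\area(\bd B_s)=\mathcal H^n(\partial^* B_s)\le\mathcal H^n(\{f=c_s\})\le K$. Combining with the previous paragraph yields $\area(\theta(T,t))\le\area(T)+K$, as claimed. One should also note in passing that $K$ is genuinely finite: $f$ has only finitely many critical points, so even the level sets through critical values are smooth hypersurfaces off a finite set and have finite $\mathcal H^n$-measure, and $c\mapsto\mathcal H^n(\{f=c\})$ is bounded on the compact interval $[\min f,\max f]$.

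I do not expect a serious obstacle here; the argument is essentially bookkeeping. The two steps that genuinely require a line of justification are (i) that replacing $\Omega$ by its complement leaves $\theta(T,\cdot)$ unchanged, which rests on the oddness of $\psi$ established in Proposition \ref{defret2}, and (ii) the geometric-measure-theory fact that perimeter is subadditive under unions — this is precisely the source of the extra term $K$ in the statement.
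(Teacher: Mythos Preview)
Your proof is correct and follows essentially the same route as the paper's: reduce to $\Omega\in\C_{\le\hv}(M)$ with $\bd\Omega=T$, write $\theta(T,t)=\bd(\Omega\cup B_{s(\Omega,t)})$, and use subadditivity of perimeter (the paper phrases this as the set inclusion $\bd(\Omega\cup B_s)\subset\bd\Omega\cup\bd B_s$) together with $\area(\bd B_s)\le K$. Your write-up is simply more explicit about the reduction step and the bound on $\area(\bd B_s)$.
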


\begin{proof}
Fix some $T\in \B(M,\Z_2)$ and some $t\in [0,1]$. Choose a set $\Omega\in \C_{\le \hv}(M)$ such that $\bd \Omega = T$. Let $\phi$ be the deformation retraction from Proposition \ref{defret}. Then 
\[
\theta(T,t) = \bd \phi(\Omega,t) = \bd(\Omega \cup B_{s(\Omega,t)}). 
\]
Note that $\bd(\Omega \cup B_{s(\Omega,t)}) \subset \bd \Omega \cup \bd B_{s(\Omega,t)}$ and therefore $\area(\theta(T,t)) \le \area(\bd \Omega) + K = \area(T) + K$, as needed
\end{proof}

The homotopy groups of the cycle spaces were originally computed by Almgren \cite{A1}.
Later, Marques and Neves \cite{MN2} gave a simplified proof in the case of codimension 1 cycles.

\begin{theorem}[Marques-Neves]
\label{topz}
The map $\bd \f \mathcal C(M)\to \B(M,\Z_2)$ is a double cover. The space $\mathcal C(M)$ is contractible, and $\B(M,\Z_2)$ is weakly homotopy equivalent to $\RP^\infty$.
\end{theorem}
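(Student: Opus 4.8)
The plan is to establish the double-cover structure of $\bd\f\mathcal C(M)\to\B(M,\Z_2)$ directly, prove contractibility of $\mathcal C(M)$ by an explicit deformation built from a Morse function, and then read off the homotopy type of $\B(M,\Z_2)$ from the long exact sequence of the resulting principal $\Z_2$-bundle.

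\emph{The double cover.} Suppose $\Omega,\Omega'\in\mathcal C(M)$ have $\bd\Omega=\bd\Omega'$ in $\z(M,\Z_2)$. Then $\Omega\sd\Omega'$ is a top-dimensional flat chain mod $2$ in $M$ with zero boundary, so by the constancy theorem and connectedness of $M$ it equals $\emptyset$ or $M$ up to measure zero; hence the fiber of $\bd$ over $T=\bd\Omega$ is the two-point set $\{\Omega,\,M\setminus\Omega\}$ (these are distinct since $\vol M>0$). Thus $\bd$ realizes $\B(M,\Z_2)$ as the quotient of $\mathcal C(M)$ by the free involution $\iota\f\Omega\mapsto M\setminus\Omega$. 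On $\mathcal C(M)$ the flat topology agrees with the $L^1$ topology of indicator functions — for top-dimensional chains in the closed manifold $M$ the flat norm of $\Omega$ is just $\vol(\Omega)$ — so $\mathcal C(M)$ is metrized by $d(\Omega,\Omega')=\vol(\Omega\sd\Omega')$, and $\iota$ is an isometry. Moreover $\iota$ acts properly discontinuously: since $\Omega\sd\Theta$ and $(M\setminus\Omega)\sd\Theta$ partition $M$, one has $\vol(\Omega\sd\Theta)+\vol((M\setminus\Omega)\sd\Theta)=\vol M$, so the ball $U=\{\Theta:\vol(\Theta\sd\Omega)<\tfrac12\vol M\}$ is disjoint from $\iota(U)$. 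Finally $\bd$ is an open map, since $\bd^{-1}(\bd(V))=V\cup\iota(V)$ for every open $V$. Together these show $\bd$ is a covering map of degree $2$.

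\emph{Contractibility of $\mathcal C(M)$.} Fix a Morse function $f\f M\to[0,1]$ with $\min f=0$ and $\max f=1$, and for $t\in[0,1]$ let $L_t=\{f\le t\}$; since the level sets of $f$ have measure zero, $t\mapsto\vol(L_t)$ is continuous and $\vol(L_t\sd L_{t'})=|\vol(L_t)-\vol(L_{t'})|$. Define $H\f\mathcal C(M)\times[0,1]\to\mathcal C(M)$ by $H(\Omega,t)=\Omega\cup L_t$. Then $H(\Omega,0)=\Omega$ (as $L_0$ has measure zero) and $H(\Omega,1)=M$, and $H$ is continuous because the inequality $|\max(a,b)-\max(a',b')|\le|a-a'|+|b-b'|$ for real numbers, applied to indicator functions, gives $d(H(\Omega,t),H(\Omega',t'))\le d(\Omega,\Omega')+\vol(L_t\sd L_{t'})$. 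Hence $H$ contracts $\mathcal C(M)$ onto the point $M\in\mathcal C(M)$.

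\emph{Conclusion.} A covering map is a Serre fibration, so $\Z_2\to\mathcal C(M)\xrightarrow{\bd}\B(M,\Z_2)$ yields a long exact sequence of homotopy groups. Since $\mathcal C(M)$ is contractible, this gives $\pi_1(\B(M,\Z_2))\cong\pi_0(\Z_2)=\Z_2$ and $\pi_k(\B(M,\Z_2))=0$ for $k\ge 2$; as $\B(M,\Z_2)$ is path-connected it is therefore a $K(\Z_2,1)$, hence weakly homotopy equivalent to the standard CW model $\RP^\infty$. (Equivalently, the classifying map $\B(M,\Z_2)\to B\Z_2=\RP^\infty$ of the principal $\Z_2$-bundle $\mathcal C(M)\to\B(M,\Z_2)$ is a weak homotopy equivalence precisely because its total space is contractible.) I expect the one step needing genuine care to be the proof that $\bd$ is a covering map: this is where the measure-theoretic inputs enter — the constancy theorem to pin down the fibers, and the coincidence of the flat and $L^1$ topologies on $\mathcal C(M)$ to legitimize the elementary volume estimates — whereas the contraction and the homotopy-theoretic bookkeeping are formal.
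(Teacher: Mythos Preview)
The paper does not actually prove this theorem; it is stated with attribution to Marques--Neves \cite{MN2} (with the original computation due to Almgren \cite{A1}) and invoked as a black box. So there is no in-paper proof to compare against directly.

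Your argument is correct and is essentially the Marques--Neves approach specialized to codimension one. The three ingredients---constancy theorem for the fiber description, the identification of the flat and $L^1$ topologies on top-dimensional chains in a closed manifold, and the explicit contraction $H(\Omega,t)=\Omega\cup L_t$ via sublevel sets of a Morse function---are exactly the right ones, and the homotopy-theoretic conclusion from the long exact sequence of the $\Z_2$-bundle is standard. Your contraction is in fact simpler than the deformation the paper builds in Proposition~\ref{defret}: there the target is $\C_\hv(M)$ rather than a point, so the sublevel set must be chosen adaptively to hit a prescribed volume, which introduces the continuity subtleties the paper works through; your version avoids this entirely since $L_1=M$ absorbs every $\Omega$. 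One small point worth making explicit (you gesture at it) is that path-connectedness of $\B(M,\Z_2)$ follows immediately from surjectivity of $\bd$ and contractibility of $\C(M)$, since the long exact sequence of pointed sets does not by itself force $\pi_0$ of the base to be trivial.
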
 

Combined with the previous propositions, this yields the following corollary. 

\begin{corollary}
\label{toph}
The map $\bd \f \C_{\hv}(M)\to \h(M,\Z_2)$ is a double cover. The space $\C_{\hv}(M)$ is contractible and $\h(M,\Z_2)$ is weakly homotopy equivalent to $\RP^\infty$. The inclusion map $\h(M,\Z_2) \to \B(M,\Z_2)$ is a homotopy equivalence. 
\end{corollary}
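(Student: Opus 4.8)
The plan is to assemble the corollary from the Marques--Neves computation (Theorem \ref{topz}) together with the deformation retractions built in Propositions \ref{defret} and \ref{defret2}; the only analytic input, namely continuity of the retraction in the flat topology, has already been verified, so what remains is bookkeeping with elementary covering-space theory. The first observation is that $\C_\hv(M)$ is contractible: by Theorem \ref{topz} the space $\C(M)$ is contractible, and by Proposition \ref{defret2} it deformation retracts onto $\C_\hv(M)$, so $\C_\hv(M)$ is a deformation retract of a contractible space and hence itself contractible. In particular $\C_\hv(M)$ is path-connected with vanishing homotopy groups.

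Next I would observe that the deformation retraction $\theta$ of $\B(M,\Z_2)$ onto $\h(M,\Z_2)$ from Proposition \ref{defret2} actually fixes $\h(M,\Z_2)$ pointwise: the underlying map $\psi$ restricts to $\phi$ on sets of volume $\le \hv$, and $\phi(\Omega,t)=\Omega$ whenever $\Omega\in\C_\hv(M)$. Thus $\theta$ is a strong deformation retraction, so the inclusion $\h(M,\Z_2)\hookrightarrow\B(M,\Z_2)$ is a homotopy equivalence with homotopy inverse $\theta(\cdot,1)$. Since $\B(M,\Z_2)$ is weakly homotopy equivalent to $\RP^\infty$ by Theorem \ref{topz}, so is $\h(M,\Z_2)$.

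It remains to treat the covering map. By Theorem \ref{topz}, $\bd\colon\C(M)\to\B(M,\Z_2)$ is a double cover whose nontrivial deck transformation is $\Omega\mapsto M\setminus\Omega$. One checks that $\bd^{-1}(\h(M,\Z_2))=\C_\hv(M)$: the inclusion $\supseteq$ is the definition of $\h(M,\Z_2)$, while for $\subseteq$, if $\bd\Omega=\bd\Omega'$ with $\Omega'\in\C_\hv(M)$ then $\Omega$ equals $\Omega'$ or $M\setminus\Omega'$, and both have volume $\hv$. Since the restriction of a covering map to the preimage of a subspace is again a covering map, $\bd\colon\C_\hv(M)\to\h(M,\Z_2)$ is a double cover; it is connected (hence nontrivial, consistent with $\pi_1(\h(M,\Z_2))=\Z_2$ and with $\C_\hv(M)$ being the universal cover) because $\C_\hv(M)$ is path-connected and the two sheets $\Omega$, $M\setminus\Omega$ over any point are genuinely distinct as $\vol(\Omega\,\sd\,(M\setminus\Omega))=\vol(M)>0$.

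The substantive difficulty, continuity of the retraction in the flat topology, is already behind us in Proposition \ref{defret}, so at this stage there is no real obstacle; the only points requiring a little care are that $\theta$ fixes $\h(M,\Z_2)$ pointwise (so that ``deformation retract'' upgrades to ``the inclusion is a homotopy equivalence'') and the set-theoretic identity $\bd^{-1}(\h(M,\Z_2))=\C_\hv(M)$ together with the connectedness of the resulting double cover.
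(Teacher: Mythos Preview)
Your proof is correct and follows precisely the approach the paper intends: the corollary is stated without proof, as it is immediate from Theorem \ref{topz} together with Propositions \ref{defret} and \ref{defret2}, and you have simply spelled out the bookkeeping. One minor remark: any deformation retraction (strong or not) already forces the inclusion to be a homotopy equivalence, so verifying that $\theta$ fixes $\h(M,\Z_2)$ pointwise is not strictly necessary, though it does no harm.
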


We now recall the notion of sweepouts.  Since $\B(M,\Z_2)$ is weakly homotopy equivalent to $\RP^\infty$,  it follows that the cohomology ring of $\B(M,\Z_2)$ with $\Z_2$ coefficients is $\Z_2[\lambda]$, where the generator $\lambda$ is of degree 1.  Let $X$ be a cubical complex. 

\begin{defn}
\label{ps}
A flat continuous map $\Phi\f X\to \B(M,\Z_2)$ is called a $p$-sweepout if $\Phi^*\lambda^p \neq 0$ in $H^p(X,\Z_2)$. 
\end{defn}

\begin{defn}
A map $\Phi\f X\to \z(M,\Z_2)$ is said to have {\it no concentration of mass} provided 
\[
\lim_{r\to 0} \left[\sup_{q\in M} \sup_{x\in X} \area(\Phi(x) \llcorner B(q,r))\right] = 0.
\]
\end{defn}

\begin{defn} Let $\mathcal P_p(M)$ denote the collection of all $p$-sweepouts of $M$ with no concentration of mass.  Note that different $p$-sweepouts may have different domains. 
\end{defn}

\begin{defn} The $p$-width of $M$ is 
\[
\omega_p = \inf_{\Phi\in \mathcal P_p(M)} \left[\sup_{x\in \operatorname{dom}(\Phi)} \area(\Phi(x))\right]. 
\]
\end{defn}

\begin{rem}
In \cite{LMN}, the authors state that the cohomology ring of $\z(M,\Z_2)$ is isomorphic to $\Z_2[\lambda]$.  Then they  define a $p$-sweepout as a map $\Phi\f X\to \z(M,\Z_2)$ such that $\Phi^*\lambda^p\neq 0$. However, the cohomology ring of $\z(M,\Z_2)$ is actually $\oplus_i \Z_2[\lambda_i]$ where the direct sum is taken over the connected components of $\z(M,\Z_2)$. These connected components are in bijection with homology classes in $H_n(M,\Z_2)$.  
Given this, there are several possible ways to define a $p$-sweepout.  The simplest, which we shall adopt, is to replace the space $\z(M,\Z_2)$ with $\B(M,\Z_2)$ as in Definition \ref{ps} so that the cohomology ring is indeed $\Z_2[\lambda]$. Alternatively, one could define a $p$-sweepout as a map $\Phi\f X\to \z(M,\Z_2)$ such that $\Phi^*\lambda_i^p\neq 0$ for some $i$. In either case, it is straightforward to see that one still obtains a Weyl law for the resulting $p$-widths. 
\end{rem} 

We can now introduce the central object of the paper.  By Corollary \ref{toph}, the cohomology ring of $\h(M,\Z_2)$ with $\Z_2$ coefficients is also $\Z_2[\lambda]$.  Again let $X$ be a cubical complex.

\begin{defn} 
A flat continuous map $\Phi\f X\to \h(M,\Z_2)$ is called a half-volume $p$-sweepout if $\Phi^* \lambda^p \neq 0$ in $H^p(X,\Z_2)$. 
\end{defn} 

\begin{defn}
Let $\mathcal Q_p(M)$ denote the collection of all half-volume $p$-sweepouts of $M$ with no concentration of mass. 
\end{defn}

\begin{defn}
The half-volume $p$-width of $M$ is 
\[
\tilde \omega_p = \inf_{\Phi\in \mathcal Q_p(M)}\left[\sup_{x\in \operatorname{dom}(\Phi)} \area(\Phi(x))\right]. 
\]
\end{defn}

\noindent We will call the sequence $\{\tilde \omega_p\}_{p\in \N}$ the half-volume spectrum of $M$.

Liokumovich, Marques, and Neves \cite{LMN} showed that the $p$-widths of $M$ satisfy a Weyl law. 

\begin{theorem}[Liokumovich, Marques, Neves]
\label{weyl} 
There is a universal constant $a_n$ such that 
$
\omega_p \sim a_n \vol(M)^{n/(n+1)}p^{1/(n+1)}$ as $p\to \infty$.
\end{theorem}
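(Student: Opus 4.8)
The plan is to follow the strategy of Liokumovich--Marques--Neves \cite{LMN}: reduce the Weyl law to a two-sided ``almost additivity'' of the widths under disjoint decomposition, combine it with the sharp asymptotics on a single Euclidean cube, and then transplant the cube asymptotics to $M$ through a fine, almost-Euclidean decomposition.

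First I would set up \emph{relative} $p$-widths. For an open set $U$ contained in a Riemannian manifold, with rectifiable boundary, let $\z(U,\bd U;\Z_2)$ denote the space of relative flat $n$-cycles mod $2$; by the Almgren isomorphism \cite{A1} this has the same weak homotopy type as $\RP^\infty$, so its $\Z_2$-cohomology ring is again $\Z_2[\lambda]$, and one defines $\omega_p(U)$ exactly as $\omega_p$ is defined above, using flat-continuous maps $X\to\z(U,\bd U;\Z_2)$ with no concentration of mass and with $\Phi^*\lambda^p\neq0$. I would then record the elementary properties: monotonicity $\omega_p(U)\le\omega_p(V)$ for $U\subset V$; the exact scaling $\omega_p(rU)=r^n\,\omega_p(U)$ for Euclidean $U$; quasi-invariance under bi-Lipschitz maps, with multiplicative error tending to $1$ as the bi-Lipschitz constant tends to $1$; and, crucially, the Gromov--Guth bounds $C_1(U)\,p^{1/(n+1)}\le\omega_p(U)\le C_2(U)\,p^{1/(n+1)}$ from \cite{G1,Gu}, which guarantee that $\overline\omega(U):=\limsup_p\omega_p(U)p^{-1/(n+1)}$ and $\underline\omega(U):=\liminf_p\omega_p(U)p^{-1/(n+1)}$ are finite and positive.

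The core is the almost additivity. For the lower bound I would prove a Lusternik--Schnirelmann inequality: if $U_1,\dots,U_N\subset M$ are pairwise disjoint open sets with rectifiable boundary, then for all $p_1,\dots,p_N\in\N$,
\[
\omega_{p_1+\dots+p_N+N}(M)\ \ge\ \sum_{i=1}^N\omega_{p_i}(U_i)
\]
(the precise additive shift is immaterial). The proof takes a near-optimal $(p_1+\dots+p_N+N)$-sweepout of $M$, perturbs each $\bd U_i$ to be transverse to it, restricts the slices to each $U_i$, and uses the cup-length structure of $\Z_2[\lambda]$ together with the compatibility $r_i^*\lambda_{U_i}=\lambda_M$ of the restriction maps $r_i\f\B(M,\Z_2)\to\z(U_i,\bd U_i;\Z_2)$ to locate a single parameter at which all $N$ restricted slices are simultaneously ``$p_i$-heavy.'' For the upper bound I would use an interpolation/gluing construction: given relative sweepouts of the pieces $U_i$ realizing $\omega_{q_i}(U_i)$, one synchronizes them along the shared interfaces $\bd U_i\cap\bd U_j$ and glues to obtain a sweepout of $M$; the glued family detects $\lambda_M^{\,q_1+\dots+q_N}$ provided the $q_i$ are chosen with non-overlapping binary expansions (so the relevant multinomial coefficient is odd, by Lucas' theorem), its slices have no concentration of mass, and its maximal mass is at most $\sum_i\omega_{q_i}(U_i)$ plus the total interface area. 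Feeding the self-similar subdivisions of the unit cube $[0,1]^{n+1}$ into these two inequalities forces $\overline\omega=\underline\omega$ on cubes; setting $a_n:=\lim_p\omega_p([0,1]^{n+1})\,p^{-1/(n+1)}$ and invoking the scaling and monotonicity properties upgrades this to
\[
\lim_{p\to\infty}\omega_p(U)\,p^{-1/(n+1)}=a_n\,|U|^{n/(n+1)}
\]
for every Jordan-measurable $U\subset\R^{n+1}$.

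Finally, fix $(M^{n+1},g)$ and $\eps>0$. Using normal coordinates on a fine grid I would cover $M$, up to a set of volume $<\eps$, by finitely many pairwise disjoint open sets $U_1,\dots,U_N$, each $(1+\eps)$-bi-Lipschitz to a Euclidean cube, with total interface area bounded independently of $p$. Applying the almost additivity and splitting the index proportionally, $q_i\approx p\,|U_i|/\vol(M)$, the H\"older identity $\sum_i|U_i|^{n/(n+1)}q_i^{1/(n+1)}=\vol(M)^{n/(n+1)}p^{1/(n+1)}$ together with the cube asymptotics on each $U_i$ gives
\[
\omega_p(M)=a_n\,\vol(M)^{n/(n+1)}p^{1/(n+1)}\bigl(1+O(\eps)+o_p(1)\bigr);
\]
letting $p\to\infty$ and then $\eps\to0$ proves the theorem. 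The main obstacle is making the two-sided almost additivity quantitatively sharp: the additive index shift $N$, the additive interface error, and the bi-Lipschitz distortion must all be controlled uniformly enough that, after the proportional split and the H\"older computation, they are genuinely lower order; in particular the convergence on cubes must be established with enough uniformity to apply on the pieces $U_i$, where the proportional split makes each $q_i$ only moderately large, rather than merely as an abstract limit.
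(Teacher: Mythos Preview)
The paper does not supply a proof of this theorem: it is quoted as a known result of Liokumovich--Marques--Neves \cite{LMN} and then used as a black box to deduce the Weyl law for the half-volume spectrum via the sandwich $\omega_p\le\tilde\omega_p\le\omega_p+K+1$. There is therefore nothing in the present paper to compare your proposal against.

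That said, as a sketch of the argument in \cite{LMN} your outline is broadly accurate: relative widths for domains, a Lusternik--Schnirelmann lower bound under disjoint decomposition, an upper bound obtained by gluing relative sweepouts of the pieces, convergence first on Euclidean cubes via self-similar subdivision, and finally a transplant to a closed manifold through a fine bi-Lipschitz decomposition into near-Euclidean cells. A couple of details in your outline differ from \cite{LMN}---the gluing step there proceeds by explicit relative-cycle interpolation rather than the Lucas-theorem/non-overlapping-binary-expansion mechanism you describe, and the Lusternik--Schnirelmann inequality in \cite{LMN} does not carry the additive ``$+N$'' index shift---but at the level of an outline these are minor discrepancies.
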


Next, we will show that the half-volume spectrum also satisfies a Weyl law.  It is possible to prove this directly.  
  However, this is not the approach we will take.  Rather, we will show that the Weyl law for the half-volume spectrum follows from Theorem \ref{weyl}, together with the fact that every $p$-sweepout is homotopic to a $p$-sweepout by half-volume cycles.

\begin{prop}
\label{comp1}
The half-volume spectrum satisfies $\omega_p \le \tilde \omega_p$ for all $p\in \N$. 
\end{prop}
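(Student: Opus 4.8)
The plan is to check that composing a half-volume $p$-sweepout with the inclusion $\h(M,\Z_2)\hookrightarrow\B(M,\Z_2)$ produces an ordinary $p$-sweepout of $M$ with the same supremum of areas; then $\mathcal Q_p(M)$ maps into $\mathcal P_p(M)$ without increasing widths, and passing to infima yields $\omega_p\le\tilde\omega_p$ at once.

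In detail, write $\iota\f\h(M,\Z_2)\to\B(M,\Z_2)$ for the inclusion, and let $\Phi\f X\to\h(M,\Z_2)$ be a half-volume $p$-sweepout with no concentration of mass, where $X$ is a cubical complex. Consider $\iota\circ\Phi\f X\to\B(M,\Z_2)$. I would verify three points. First, $\iota\circ\Phi$ is flat continuous: the flat topology on $\h(M,\Z_2)$ is the subspace topology inherited from $\B(M,\Z_2)$, so $\iota$ is continuous. Second, $\iota\circ\Phi$ has no concentration of mass: that condition only involves the mass measures $\area(\Phi(x)\llcorner B(q,r))$, which are unchanged when the images are viewed inside $\B(M,\Z_2)$. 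Third, $(\iota\circ\Phi)^*\lambda^p\neq 0$ in $H^p(X,\Z_2)$: by Corollary \ref{toph} the inclusion $\iota$ is a homotopy equivalence, so $\iota^*$ is an isomorphism of cohomology rings; since both rings are $\Z_2[\lambda]$ with $\lambda$ in degree $1$ and $H^1(\,\cdot\,,\Z_2)$ is one-dimensional, we have $\iota^*\lambda=\lambda$ (indeed the generator $\lambda$ on $\h(M,\Z_2)$ is by construction the pullback of the generator on $\B(M,\Z_2)$). Hence $(\iota\circ\Phi)^*\lambda^p=\Phi^*(\iota^*\lambda)^p=\Phi^*\lambda^p\neq 0$, so $\iota\circ\Phi\in\mathcal P_p(M)$.

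Since $\area\big((\iota\circ\Phi)(x)\big)=\area(\Phi(x))$ for every $x\in X$, it follows that
\[
\omega_p=\inf_{\Psi\in\mathcal P_p(M)}\ \sup_{x\in\operatorname{dom}(\Psi)}\area(\Psi(x))\ \le\ \inf_{\Phi\in\mathcal Q_p(M)}\ \sup_{x\in\operatorname{dom}(\Phi)}\area(\Phi(x))=\tilde\omega_p .
\]
There is no genuine obstacle in this direction; the only point requiring a moment's care is the identification $\iota^*\lambda=\lambda$, which is forced by the homotopy equivalence of Corollary \ref{toph}. The substantive content lies in the reverse inequality $\tilde\omega_p\le\omega_p$, where one must upgrade an arbitrary $p$-sweepout to one by half-volume cycles while keeping control of areas — presumably by post-composing with the deformation retraction $\theta$ of Proposition \ref{defret2} and invoking the area estimate of Proposition \ref{bound}.
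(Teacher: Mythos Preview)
Your proof is correct and follows exactly the paper's approach: the paper simply observes that every half-volume $p$-sweepout with no concentration of mass automatically lies in $\mathcal P_p(M)$, and you have spelled out the verification (flat continuity, no concentration of mass, and $\iota^*\lambda=\lambda$ via Corollary~\ref{toph}) that the paper leaves implicit. One small correction to your closing remark: the paper does not prove the reverse inequality $\tilde\omega_p\le\omega_p$, but rather $\tilde\omega_p\le\omega_p+K+1$ (Proposition~\ref{comp2}), which suffices for the Weyl law.
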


\begin{proof}
Notice that any half-volume $p$-sweepout with no concentration of mass automatically belongs to ${\mathcal P_p}(M)$.  Therefore, the proposition follows immediately from the definitions of $\omega_p$ and $\tilde \omega_p$. 
\end{proof}

\begin{prop}
\label{comp2}
There is a constant $K$ depending only on $M$ such that $\tilde \omega_p \le  \omega_p + K + 1$ for all $p \in \N$. 
\end{prop}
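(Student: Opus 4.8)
The plan is to take a near-optimal $p$-sweepout of $M$ and push it into the space $\h(M,\Z_2)$ of half-volume cycles using the deformation retraction of Proposition \ref{defret2}, using Proposition \ref{bound} to control the extra area. Concretely, fix $\Phi\in\mathcal P_p(M)$ with $\sup_{x\in\operatorname{dom}(\Phi)}\area(\Phi(x))\le\omega_p+1$, which exists because $\omega_p$ is defined as an infimum. Let $\theta\f\B(M,\Z_2)\times[0,1]\to\B(M,\Z_2)$ be the deformation retraction from Proposition \ref{defret2} and define $\Psi\f\operatorname{dom}(\Phi)\to\h(M,\Z_2)$ by $\Psi(x)=\theta(\Phi(x),1)$. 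Since $\theta(\cdot,1)$ is continuous in the flat topology and $\Phi$ is flat continuous, $\Psi$ is flat continuous; and Proposition \ref{bound} gives $\area(\Psi(x))\le\area(\Phi(x))+K\le\omega_p+K+1$ for every $x$, where $K$ is the maximal area of a level set of the fixed Morse function used to construct $\theta$. So it remains only to check that $\Psi$ is a half-volume $p$-sweepout with no concentration of mass; granting this, $\Psi\in\mathcal Q_p(M)$ and the definition of $\tilde\omega_p$ yields $\tilde\omega_p\le\omega_p+K+1$.

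For the sweepout property I would use Corollary \ref{toph}: the inclusion $i\f\h(M,\Z_2)\hookrightarrow\B(M,\Z_2)$ is a homotopy equivalence, so $i^*$ is a ring isomorphism on $\Z_2$-cohomology and identifies the degree-one generators $\lambda$ of $H^1(\B(M,\Z_2),\Z_2)$ and $H^1(\h(M,\Z_2),\Z_2)$. The assignment $(x,t)\mapsto\theta(\Phi(x),t)$, read as a map into $\B(M,\Z_2)$, is a homotopy from $\Phi$ (at $t=0$, since $\theta(\cdot,0)$ is the identity) to $i\circ\Psi$ (at $t=1$). Hence $(i\circ\Psi)^*\lambda^p=\Phi^*\lambda^p\ne0$ in $H^p(\operatorname{dom}(\Phi),\Z_2)$, and since $i^*\lambda=\lambda$ we get $\Psi^*\lambda^p=(i\circ\Psi)^*\lambda^p\ne0$, i.e.\ $\Psi$ is a half-volume $p$-sweepout.

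The hard part will be no concentration of mass. Choosing for each $x$ a set $\Omega_x\in\C_{\le\hv}(M)$ with $\bd\Omega_x=\Phi(x)$, we have $\Psi(x)=\bd(\Omega_x\cup B_{s(\Omega_x,1)})$, whose essential boundary is contained in that of $\Omega_x$ together with that of the level set $B_{s(\Omega_x,1)}$; passing to the mass measures this gives, for every ball $B(q,r)$,
\[
\area(\Psi(x)\llcorner B(q,r))\le\area(\Phi(x)\llcorner B(q,r))+\area(\bd B_{s(\Omega_x,1)}\llcorner B(q,r)).
\]
Taking the supremum over $x$ and $q$, the first term tends to $0$ as $r\to0$ because $\Phi\in\mathcal P_p(M)$, while the second is at most $\sup_q\sup_{s\in[0,\hv]}\area(\bd B_s\llcorner B(q,r))$, so everything reduces to the claim that the level sets of the fixed Morse function $f$ have uniformly no concentration of mass: $\lim_{r\to0}\sup_q\sup_c\mathcal H^n(f^{-1}(c)\cap B(q,r))=0$. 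Away from the finitely many critical points of $f$, $|\nabla f|$ is bounded below, so there the level sets are uniformly Lipschitz graphs and contribute $O(r^n)$; near a critical point the Morse lemma presents the level sets as level sets $\{Q=c\}$ of a nondegenerate quadratic form, for which the same $O(r^n)$ bound holds uniformly in $c$. The only subtle case, when $c$ is near a critical value and $f^{-1}(c)$ develops a thin neck, is still controlled: $\{Q=c\}\cap B(0,r)$ is empty once $|c|$ exceeds a fixed multiple of $r^2$, and for smaller $|c|$ a direct computation (or the classical degree bound for real algebraic varieties) gives the estimate. I expect this uniformity near critical values to be the only genuinely technical point.
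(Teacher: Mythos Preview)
Your proposal is correct and follows essentially the same approach as the paper: take a near-optimal $p$-sweepout, compose with $\theta(\cdot,1)$, and use Proposition~\ref{bound} for the area bound. The paper's proof is in fact terser than yours---it simply asserts that $\Psi$ is a half-volume $p$-sweepout and that ``it is straightforward to check that $\Psi$ has no concentration of mass,'' whereas you supply the homotopy argument via Corollary~\ref{toph} and a genuine analysis of the Morse-function level sets near and away from critical points.
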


\begin{proof}
Choose a $p$-sweepout $\Phi\f X\to \B(M,\Z_2)$ in ${\mathcal P_p}(M)$ with 
\[
\sup_{x\in X}\area(\Phi(x)) \le  \omega_p + 1.
\]
Let $\theta\f \B(M,\Z_2)\times[0,1]\to \B(M,\Z_2)$ be the deformation retraction constructed in Proposition \ref{defret2}. By Proposition \ref{bound}, there is a constant $K$ such that 
\[
\area(\theta(T,t)) \le \area(T) + K
\]
for all $T \in \B(M,\Z_2)$ and all $t\in[0,1]$.  Therefore, the map $\Psi\f X \to \h(M,\Z_2)$ given by 
$
\Psi(x) = \theta(\Phi(x),1)
$
is a half-volume $p$-sweepout with
\[
\sup_{x\in X} \area(\Psi(x)) \le \sup_{x\in X} \area(\Phi(x)) + K.
\]
Moreover, it is straightforward to check that $\Psi$ has no concentration of mass.  This proves that $\tilde \omega_p \le  \omega_p + K + 1$. 
\end{proof} 

We are now able to prove Theorem \ref{wl1}.

\begin{theo}
The Weyl law holds for the half-volume spectrum.  In other words, we have $\tilde \omega_p \sim a_n \vol(M)^{n/(n+1)}p^{1/(n+1)}$ as $p\to \infty$. 
\end{theo}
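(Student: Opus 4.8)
The plan is to deduce the Weyl law for $\tilde\omega_p$ directly from the Weyl law for the usual volume spectrum (Theorem \ref{weyl}) by a sandwiching argument, using the two comparison estimates already in hand. From Proposition \ref{comp1} we have $\omega_p \le \tilde\omega_p$, and from Proposition \ref{comp2} we have $\tilde\omega_p \le \omega_p + K + 1$ with $K$ depending only on $M$. Hence
\[
\omega_p \le \tilde\omega_p \le \omega_p + K + 1
\]
for every $p \in \N$.

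Dividing through by $p^{1/(n+1)}$ gives
\[
\frac{\omega_p}{p^{1/(n+1)}} \le \frac{\tilde\omega_p}{p^{1/(n+1)}} \le \frac{\omega_p}{p^{1/(n+1)}} + \frac{K+1}{p^{1/(n+1)}}.
\]
As $p \to \infty$ the error term $(K+1)p^{-1/(n+1)}$ tends to $0$, while Theorem \ref{weyl} gives $\omega_p p^{-1/(n+1)} \to a_n \vol(M)^{n/(n+1)}$. By the squeeze theorem, $\tilde\omega_p p^{-1/(n+1)} \to a_n \vol(M)^{n/(n+1)}$ as well, which is precisely the assertion $\tilde\omega_p \sim a_n \vol(M)^{n/(n+1)} p^{1/(n+1)}$.

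The substantive work lies entirely in the two comparison propositions and, behind them, in Corollary \ref{toph}: the fact that $\h(M,\Z_2)$ is weakly homotopy equivalent to $\RP^\infty$ with $\Z_2$-cohomology ring $\Z_2[\lambda]$ (so the cohomological definition of a half-volume $p$-sweepout is meaningful) and the fact that the deformation retraction of Proposition \ref{defret2} pushes every $p$-sweepout into $\h(M,\Z_2)$ while increasing areas by at most $K$. Once those are available, there is no remaining obstacle — the argument above is elementary asymptotics. I would also note that the method is robust: any ``constrained'' width obtained by retracting the cycle space onto a subspace at bounded area cost satisfies the same Weyl law, since a bounded additive error is absorbed by the sublinear growth $p^{1/(n+1)}$.
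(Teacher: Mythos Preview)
Your proof is correct and is essentially identical to the paper's own argument: both sandwich $\tilde\omega_p$ between $\omega_p$ and $\omega_p + K + 1$ via Propositions \ref{comp1} and \ref{comp2}, then invoke Theorem \ref{weyl} and the squeeze theorem. The only cosmetic difference is that the paper divides by $a_n \vol(M)^{n/(n+1)} p^{1/(n+1)}$ rather than $p^{1/(n+1)}$, which amounts to the same thing.
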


\begin{proof}
This follows from Proposition \ref{comp1} and Proposition \ref{comp2}.  Indeed, we have 
\[
\omega_p \le \tilde \omega_p \le \omega_p + K + 1.
\]
Theorem \ref{weyl} implies that 
\[
\lim_{p\to\infty} \frac{\omega_p}{a_n \vol(M)^{n/(n+1)}p^{1/(n+1)}} = 1 \quad \text{and} \quad \lim_{p\to\infty} \frac{\omega_p+ K + 1}{a_n \vol(M)^{n/(n+1)}p^{1/(n+1)}} = 1,
\]
and it follows that 
\[
\lim_{p\to\infty} \frac{\tilde \omega_p}{a_n \vol(M)^{n/(n+1)}p^{1/(n+1)}} = 1
\]
as well.
\end{proof}

\section{The Phase Transition Half-Volume Spectrum} 

There is also an analogous half-volume spectrum in the Allen-Cahn setting.  Let $W\f \R\to \R$ be an even double-well potential.  This means that 
\begin{itemize}
\item[(i)] $W$ is smooth and non-negative,
\item[(ii)] $W(x) = W(-x)$ for all $x\in \R$,
\item[(iii)] $W$ has non-degenerate minima $W(\pm1)=0$, \label{property(iii)}
\item[(iv)] $W$ has a non-degenerate maximum $W(0)>0$, 
\item[(v)] $W$ is increasing on $(-1,0)$ and $(1,\infty)$ and decreasing on $(0,1)$ and $(-\infty,-1)$,
\item[(vi)] there are constants $\kappa > 0$ and $\alpha\in(0,1)$ such that $W''(x) \ge \kappa$ for all $\vert x\vert \ge \alpha$. 
\end{itemize}
Define the constant 
\[
\sigma = \int_{-1}^1 \sqrt{W(s)/2}\, ds.
\]
Let $u\f M\to \R$ be an $W^{1,2}$ function.  For $\eps > 0$  define the Allen-Cahn energy 
\[
E_\eps(u) = \int_M \frac \eps 2 \vert  \grad u\vert^2 + \frac{W(u)}{\eps}.
\]
In \cite{GG2}, Gaspar and Guaraco define a phase-transition spectrum associated to $M$ via the Allen-Cahn energy.

In order to state the definition of the spectrum, we shall need some further background. A paracompact topological space $X$ is called a $\Z_2$-space if it admits a free $\Z_2$-action. Given such a space, there is always a quotient space $T = X/\Z_2$ and the natural map $X\to T$ is a principal $\Z_2$-bundle. Any such bundle arises as a pullback of the universal bundle $S^\infty \to \RP^\infty$. More precisely, there is a classifying map $f\f T\to \RP^\infty$ such that $X\to T$ is the pullback of $S^\infty \to \RP^\infty$ via $f$. The Alexander-Spanier cohomology ring of $\RP^\infty$ with $\Z_2$ coefficients is $\Z_2[\mu]$ where the generator $\mu$ is in degree one. The map $f$ is unique up to homotopy, and therefore the cohomology classes $f^*\mu^p$ are well-defined in the Alexander-Spanier cohomology ring $H^*(T,\Z_2)$. The $\Z_2$-index of $X$ is defined to be the largest $p$ such that $f^*\mu^{p-1} \neq 0$ in $H^*(T,\Z_2)$. A subspace $A$ of $X$ is called invariant if it is closed under the $\Z_2$-action. 

The $\Z_2$-index enjoys the following useful properties.  See Fadell and Rabinowitz \cite{FR} for more details.  
\begin{itemize}
\item[(i)] (Monotonicity) If $X_1$ and $X_2$ are $\Z_2$-spaces and there is a continuous equivariant map $X_1\to X_2$ then $\ind_{\Z_2}(X_1) \le \ind_{\Z_2}(X_2)$. 

\item[(ii)] (Subadditivity) If $X$ is a $\Z_2$-space and $A_1$ and $A_2$ are closed, invariant subsets with $A_1\cup A_2 = X$ then $\ind_{\Z_2}(X) \le \ind_{\Z_2}(A_1) + \ind_{\Z_2}(A_2)$. 

\item[(iii)] (Continuity) If $X$ is a $\Z_2$-space and $A$ is a closed, invariant subset of $X$ then there is an invariant neighborhood $V$ of $A$ in $X$ such that $\ind_{\Z_2}(A) = \ind_{\Z_2}(\cl V)$. 
\end{itemize}

The space $W^{1,2}(M)\setminus \{0\}$ is paracompact since it is a metric space. Moreover, it admits a natural $\Z_2$ action $u\mapsto -u$.  Note that $E_\eps$ respects this action since $E_\eps(u) = E_\eps(-u)$.  This uses the fact that $W$ is even. A set $A\subset W^{1,2}(M)\setminus\{0\}$ is called invariant provided $u\in A$ if and only if $-u\in A$.  The $\Z_2$-action on $W^{1,2}(M)\setminus \{0\}$ descends to any such $A$.   Define the families 
\[
\mathcal F_p = \{A\subset W^{1,2}(M)\setminus\{0\}:\ A\text{ is compact and invariant with } \ind_{\Z_2}(A) \ge p+1\}.
\]
Gaspar and Guaraco define the min-max values 
\[
c(\eps,p) = \frac{1}{2\sigma} \inf_{A\in \mathcal F_p} \left[\sup_{u\in A} E_\eps(u)\right]. 
\]
Then they set $c(p) = \liminf_{\eps\to 0} c(\eps,p)$. The sequence $\{c(p)\}_{p\in \N}$ is the phase transition spectrum of $M$.  

Gaspar and Guaraco \cite{GG} showed that the Weyl law also holds for the phase transition spectrum. 

\begin{theorem}[Gaspar and Guaraco]
There is a universal constant $\tau_n$ such that $c(p)\sim \tau_n \vol(M)^{n/(n+1)} p^{1/(n+1)}$ as $p\to \infty$. 
\end{theorem}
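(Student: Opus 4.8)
The plan is to follow the blueprint of Liokumovich--Marques--Neves \cite{LMN}, transplanted from integral cycles to Allen--Cahn functions: reduce the global asymptotics to a model computation on small Euclidean cubes, and then reassemble. For an open set $U \subseteq M$ let $c^U(\eps,p)$ and $c^U(p) = \liminf_{\eps\to 0} c^U(\eps,p)$ denote the min-max values defined exactly as above but with $W^{1,2}(M)$ replaced by the $W^{1,2}$ functions on $M$ that equal a fixed constant off $U$; the goal is the localized Weyl law $c^U(p) \sim \tau_n \vol(U)^{n/(n+1)} p^{1/(n+1)}$, with $\tau_n$ a dimensional constant read off from the cube model. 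As an aside, the \emph{upper} bound $c(p) \le \omega_p$ has a shorter proof paralleling Section \ref{ap}: given a near-optimal $p$-sweepout by boundaries of Caccioppoli sets, lift to the double cover $\C(M)$ and replace each $\Omega_x$ by the function $u^\eps_x$ equal to the one-dimensional heteroclinic profile $\mathbb H(\dist(\cdot,\bd\Omega_x)/\eps)$ near $\bd\Omega_x$ and $\pm 1$ elsewhere; this is an invariant family of $\Z_2$-index $\ge p+1$ with $E_\eps(u^\eps_x) \le 2\sigma\,\area(\bd\Omega_x) + o_\eps(1)$, whence $c(p) \le \omega_p$, and one could then invoke Theorem \ref{weyl}. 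But the matching lower bound $\omega_p \le c(p)$ is not elementary and seems to require the cube machinery regardless.

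\emph{Step 1: Lusternik--Schnirelmann estimates.} I would first prove that for pairwise disjoint open sets $U_1,\dots,U_N$,
\[
c(\eps,\, p_1 + \dots + p_N + N - 1) \ \ge\ \textstyle\sum_{i=1}^N c^{U_i}(\eps,p_i),
\]
and that for any finite open cover $M = U_1 \cup \dots \cup U_N$,
\[
c(\eps,\, p_1 + \dots + p_N) \ \le\ \textstyle\sum_{i=1}^N c^{U_i}(\eps,p_i) + o_\eps(1).
\]
The lower inequality is driven by subadditivity of the $\Z_2$-index (property (ii)): given $A \in \mathcal{F}_{p_1 + \dots + p_N + N - 1}$, the invariant subsets $A_i = \{u \in A : E^{U_i}_\eps(u) \le 2\sigma\, c^{U_i}(\eps,p_i)\}$ cannot all fail to cover $A$, and iterating the index inequality localizes the energy. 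The upper inequality is the delicate pasting step: choose near-optimal localized families on each $U_i$, glue the corresponding functions via a partition of unity subordinate to the cover, certify that the glued family has $\Z_2$-index at least $p_1 + \dots + p_N$ by an equivariant-join construction, and use additivity of the energy over the (almost) disjoint pieces to control $\sup E_\eps$.

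\emph{Steps 2--3: the cube model and globalization.} On a flat cube $Q = [0,L]^{n+1}$ I would establish $\lim_{p\to\infty} p^{-1/(n+1)} c^Q(\eps_p,p) = \tau_n \vol(Q)^{n/(n+1)}$ for every $\eps_p \to 0$. For the upper bound, partition $Q$ into $k^{n+1}$ congruent subcubes, run a one-parameter Allen--Cahn sweepout on each (slices being translates of a coordinate hyperplane carrying the profile $\mathbb H(\dist/\eps)$), take the equivariant join over the subcubes to get a family of index $\gtrsim k^{n+1}$ with energy per function $\lesssim 2\sigma\, k^{n} L^{n}$, and optimize $k$ against $p \sim k^{n+1}$; this is how the constant $\tau_n$ arises. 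For the lower bound, the pointwise coarea inequality $E_\eps(u) \ge \sqrt 2 \int_{-1}^1 \sqrt{W(s)}\,\mathcal H^n(\{u=s\})\,ds$ shows that reparametrized level sets of elements of $\mathcal{F}_p$, refined over a fine subcube partition, yield $p$-sweepouts of $Q$ with slice areas controlled by $E_\eps/(2\sigma)$, so the Euclidean Weyl law for the volume spectrum applies. To globalize, cover $M$ by charts in which $g$ is $(1+C\delta)$-bi-Lipschitz to cubes $Q_i$ with $\sum_i \vol(Q_i) = \vol(M) + o_\delta(1)$, set $p_i = \lceil p\,\vol(Q_i)/\vol(M)\rceil$, apply Step 1 in both directions together with the cube model, and send $p \to \infty$ then $\delta \to 0$; concavity of $t \mapsto t^{1/(n+1)}$ makes the Riemann sum $\sum_i \tau_n \vol(Q_i)^{n/(n+1)} p_i^{1/(n+1)}$ converge to $\tau_n \vol(M)^{n/(n+1)} p^{1/(n+1)}$. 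Since every estimate is uniform for $\eps$ below a threshold depending only on $\delta$ and the geometry, passing to $\liminf_{\eps\to 0}$ transfers the conclusion to $c(p)$.

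\emph{Main obstacle.} The crux is Step 1, and within it the upper (pasting) direction: gluing the localized Allen--Cahn families across the overlaps of the cover without producing extra energy while proving the glued family still has the required $\Z_2$-index — the analogue of the cup-product and join arguments of \cite{LMN}, but carried out for $W^{1,2}$ functions rather than flat cycles. A second genuine difficulty, absent in the purely geometric setting, is keeping all constructions and estimates uniform in $\eps$ as $\eps \to 0$, since the relation between the Allen--Cahn energy and hypersurface area is only asymptotic; controlling the $o_\eps(1)$ errors uniformly across a growing number of subcubes is precisely what forces the order of limits $p \to \infty$, then $\delta \to 0$, with $\eps$ driven to $0$ appropriately.
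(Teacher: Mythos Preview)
The paper does not prove this theorem: it is stated as a result of Gaspar--Guaraco \cite{GG} and then invoked as a black box in the proof of Theorem \ref{wl2}. There is no ``paper's own proof'' to compare your attempt against.

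For what it is worth, your sketch is a plausible route and contains many of the right ingredients (Lusternik--Schnirelmann subadditivity of the $\Z_2$-index, heteroclinic-profile sweepouts, the Modica--Mortola coarea lower bound), but it is more laborious than what Gaspar and Guaraco actually do in \cite{GG}. They do \emph{not} redo the cube decomposition of \cite{LMN} in the phase-transition setting; rather, they compare $c(p)$ with $\omega_p$ directly on $M$ and then cite Theorem \ref{weyl}. The upper bound $c(p) \lesssim \omega_p$ is obtained essentially as in your aside. For the lower bound they show that a family $A \in \mathcal F_p$ gives rise, via level-set slicing, to a genuine $p$-sweepout of $M$ whose slice areas are controlled by $\sup_A E_\eps/(2\sigma)$, yielding $\omega_p \lesssim c(p)$ without ever localizing to cubes. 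Your claim that the matching lower bound ``seems to require the cube machinery regardless'' is therefore too pessimistic: the comparison to $\omega_p$ can be carried out globally on $M$, and all the cube work remains confined to the cited result \cite{LMN}.
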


\noindent Dey \cite{Dey2} proved that $\omega_p = c(p)$ for all $p\in \N$. In particular, this implies that the constant $\tau_n$ is equal to the constant $a_n$.

\begin{rem}
Gaspar and Guaraco do not include the normalization constant $\frac{1}{2\sigma}$ in the definition of $c(\eps,p)$ and $c(p)$. We have chosen to include it so that one has $\omega_p = c(p)$. 
\end{rem}

It is also possible to define a half-volume spectrum in the phase transition setting.  Define 
\[
Y = \{u\in W^{1,2}(M): \int_M u = 0\}. 
\]
Note that $Y$ is a closed subspace of $W^{1,2}(M)$ and so $Y$ is also a Hilbert space.  
We can run essentially the same construction using $Y$ in place of $W^{1,2}(M)$. For each $p\in \N$, define 
\[
\mathcal G_p = \{A\subset Y\setminus\{0\}:\, A \text{ is compact and invariant with } \ind_{\Z_2}(A) \ge p+1\},
\]
and then set 
\[
\tilde c(\eps,p) = \frac{1}{2\sigma} \inf_{A\in \mathcal G_p}\left[\sup_{u\in A} E_\eps(u)\right].
\]
Taking the limit as $\eps \to 0$ gives the phase-transition half volume spectrum. 

\begin{defn} For each $p\in \N$, let $\tilde c(p) = \liminf_{\eps\to 0} \tilde c(\eps,p)$.  The phase transition half volume spectrum of $M$ is the sequence $\{\tilde c(p)\}_{p\in \N}$. 
\end{defn} 

\begin{prop} 
\label{Comp1}
The phase transition half-volume spectrum satisfies $c(p) \le \tilde c(p)$ for all $p\in \N$. 
\end{prop}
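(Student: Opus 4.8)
The plan is to exploit the fact that the admissible family $\mathcal G_p$ used to define $\tilde c(\eps,p)$ is literally a subfamily of the admissible family $\mathcal F_p$ used to define $c(\eps,p)$, so that taking the infimum over the larger family can only decrease the min-max value. Concretely, first I would observe that $Y\setminus\{0\}\subset W^{1,2}(M)\setminus\{0\}$, since $Y$ is by definition a linear subspace of $W^{1,2}(M)$. Hence if $A\subset Y\setminus\{0\}$ is compact and invariant under $u\mapsto -u$, then $A$ is also a compact, invariant subset of $W^{1,2}(M)\setminus\{0\}$.

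The one point worth spelling out is that the condition $\ind_{\Z_2}(A)\ge p+1$ is unaffected by which ambient space we regard $A$ as sitting inside: the $\Z_2$-index is defined intrinsically for the $\Z_2$-space $A$ (via the classifying map $A/\Z_2\to \RP^\infty$ of the principal bundle $A\to A/\Z_2$), and does not reference any ambient space. Therefore every $A\in\mathcal G_p$ satisfies all three defining conditions for membership in $\mathcal F_p$, which gives the inclusion $\mathcal G_p\subseteq\mathcal F_p$ for every $p\in\N$.

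Given this inclusion, for each fixed $\eps>0$ we get
\[
c(\eps,p)=\frac{1}{2\sigma}\inf_{A\in\mathcal F_p}\Big[\sup_{u\in A}E_\eps(u)\Big]\le \frac{1}{2\sigma}\inf_{A\in\mathcal G_p}\Big[\sup_{u\in A}E_\eps(u)\Big]=\tilde c(\eps,p),
\]
simply because the infimum over a larger set is no larger. Finally, since $c(\eps,p)\le\tilde c(\eps,p)$ holds for all $\eps>0$, passing to the limit inferior as $\eps\to 0$ preserves the inequality, yielding $c(p)=\liminf_{\eps\to 0}c(\eps,p)\le\liminf_{\eps\to 0}\tilde c(\eps,p)=\tilde c(p)$, as desired.

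There is essentially no serious obstacle here; the only thing that needs a moment's care is the remark that the $\Z_2$-index is an intrinsic invariant of $A$, so that the index constraint transfers verbatim from the definition of $\mathcal G_p$ to that of $\mathcal F_p$. Everything else is the elementary monotonicity of $\inf$ under enlarging the index set, followed by monotonicity of $\liminf$.
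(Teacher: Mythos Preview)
Your proposal is correct and follows exactly the same approach as the paper: observe the inclusion $\mathcal G_p\subseteq\mathcal F_p$, deduce $c(\eps,p)\le\tilde c(\eps,p)$ for every $\eps>0$, and then pass to the $\liminf$ as $\eps\to 0$. Your additional remark that the $\Z_2$-index is intrinsic to $A$ is a helpful clarification but not strictly needed beyond what the paper assumes.
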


\begin{proof}
Note that $\mathcal G_p \subset \mathcal F_p$ for every $p\in \N$. Therefore, for every $\eps > 0$, it holds that $c(\eps,p)\le \tilde c(\eps,p)$. The result then follows by sending $\eps \to 0$. 
\end{proof}

\begin{prop}
\label{Comp2}
The phase transition half-volume spectrum satisfies $\tilde c(p) \le c(p+1)$ for all $p \in \N$. 
\end{prop}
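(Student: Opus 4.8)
The plan is to prove the sharper statement $\tilde c(\eps,p)\le c(\eps,p+1)$ for every $\eps>0$ and then pass to the $\liminf$ as $\eps\to 0$, which preserves the inequality. Fix $\eps>0$ and let $A\in\mathcal F_{p+1}$ be arbitrary, so $A$ is a compact invariant subset of $W^{1,2}(M)\setminus\{0\}$ with $\ind_{\Z_2}(A)\ge p+2$. The functional $m\f W^{1,2}(M)\to\R$ given by $m(u)=\int_M u$ is bounded and linear, hence its restriction to $A$ is continuous, and it is odd, hence equivariant for the $\Z_2$-action $u\mapsto -u$ on $A$ and $t\mapsto -t$ on $\R$. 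Set $B=m^{-1}(0)=A\cap Y$. Since $A$ avoids $0$, we have $B\subset Y\setminus\{0\}$; moreover $B$ is compact and invariant, and $\sup_{u\in B}E_\eps(u)\le\sup_{u\in A}E_\eps(u)$ simply because $B\subseteq A$. So if $B\in\mathcal G_p$, then $2\sigma\,\tilde c(\eps,p)\le\sup_{u\in B}E_\eps(u)\le\sup_{u\in A}E_\eps(u)$, and taking the infimum over all $A\in\mathcal F_{p+1}$ yields $\tilde c(\eps,p)\le c(\eps,p+1)$.

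Everything therefore reduces to the index estimate $\ind_{\Z_2}(A\cap Y)\ge\ind_{\Z_2}(A)-1$, which says that cutting a $\Z_2$-space down by an invariant hyperplane drops the $\Z_2$-index by at most one. I would deduce this from the three properties of the index recalled above. First, $B=A\cap Y$ is a closed invariant subset of $A$ (the subspace $Y$ being closed and invariant), so by the Continuity property there is an invariant neighborhood $V$ of $B$ in $A$ with $\ind_{\Z_2}(\cl V)=\ind_{\Z_2}(B)$, where $\cl V$ denotes the closure of $V$ in the compact space $A$. Next, $m$ does not vanish on $A\setminus V$, so $u\mapsto m(u)/|m(u)|$ is a continuous equivariant map $A\setminus V\to S^0$; since $\ind_{\Z_2}(S^0)=1$, the Monotonicity property gives $\ind_{\Z_2}(A\setminus V)\le 1$. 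Finally, $A=(A\setminus V)\cup\cl V$ is a union of two closed invariant subsets, so Subadditivity gives $\ind_{\Z_2}(A)\le\ind_{\Z_2}(A\setminus V)+\ind_{\Z_2}(\cl V)\le 1+\ind_{\Z_2}(B)$, and rearranging gives $\ind_{\Z_2}(B)\ge\ind_{\Z_2}(A)-1\ge p+1$. Hence $B\in\mathcal G_p$, as needed.

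The index-drop inequality is the one step that requires any thought; the rest is formal. In particular the continuity and oddness of $m$, the energy bound coming from $B\subseteq A$, and the passage from $\tilde c(\eps,p)\le c(\eps,p+1)$ to $\tilde c(p)\le c(p+1)$ by taking $\liminf_{\eps\to 0}$ are all immediate. It is worth noting that this proof stays entirely within the Allen--Cahn framework and does not invoke the Weyl law or Dey's identity $\omega_p=c(p)$, which matters since Propositions \ref{Comp1} and \ref{Comp2} together are precisely what will be used to derive the Weyl law for $\tilde c(p)$.
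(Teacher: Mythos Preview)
Your proof is correct and follows essentially the same approach as the paper's: both reduce to the index-drop estimate $\ind_{\Z_2}(A\cap Y)\ge\ind_{\Z_2}(A)-1$, proved via continuity of the index to get a neighborhood $V$ of $B$, a sign map to $S^0$ on the complement, and subadditivity. Your version is slightly more streamlined in that you work with arbitrary $A\in\mathcal F_{p+1}$ and take the infimum (obtaining $\tilde c(\eps,p)\le c(\eps,p+1)$ directly), and you use $A\setminus V$ itself rather than introducing an auxiliary $\eta$-strip, but these are cosmetic differences.
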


\begin{proof}
Fix an $\eps > 0$. Select a set $A\in \mathcal F_{p+1}$ with 
\[
\sup_{u\in A} E_\eps(u) \le 2\sigma\left[ c(\eps,p+1)+ \eps\right].
\]
Define the set $B = \{u\in A: \int_M u = 0\}$ and note that $B$ is closed and invariant. We claim that $\ind_{\Z_2}(B) \ge p+1$ so that $B \in \mathcal G_p$. Given this, we obtain that $\tilde c(\eps,p) \le c(\eps,p+1)+\eps$, and the result follows upon sending $\eps \to 0$. 

It remains to prove the claim.  By the continuity of the index, there is a neighborhood $V$ of $B$ in $A$ such that $\ind_{\Z_2}(B) = \ind_{\Z_2}(\cl V)$. There is an $\eta > 0$ such that 
\[
\{u\in A: -\eta < \int_M u < \eta\} \subset V.
\]
Indeed, if not, then there is a sequence $u_k$ in $A\setminus V$ with $\int_M u_k\to 0$. Since $A\setminus V$ is compact, we can find a subsequence $u_{k_j}$ that converges to a limit $u$ in $A\setminus V$. But $u$ satisfies $\int_M u = 0$ and therefore $u \in B\subset V$ and this is a contradiction. Therefore, such an $\eta$ exists. 

Let $K = \{u\in A:\, \left\vert \int_M u \right\vert \ge \frac \eta 2\}$. Then $K$ is a closed invariant subset of $A$ and $K \cup \cl V = A$.  Define a map $K\to S^0$ by sending $u$ to $1$ if $\int_M u > 0$ and sending $u$ to $-1$ if $\int_M u < 0$. This map is continuous and equivariant and so by the monotonicity of the index we have 
\[
\ind_{\Z_2}(K) \le \ind_{\Z_2}(S^0) = 1. 
\]
Hence by the subadditivity of the index, we get 
\[
p+2 \le \ind_{\Z_2}(A) \le \ind_{\Z_2}(K) + \ind_{\Z_2}(\cl V) \le \ind_{\Z_2}(\cl V) + 1. 
\]
This implies that $\ind_{\Z_2}(B) = \ind_{\Z_2}(\cl V) \ge p+1$, and so $B \in \mathcal G_p$ as needed. 
\end{proof}

We can now prove Theorem \ref{wl2}. 

\begin{theo}
The phase transition half-volume spectrum satisfies the Weyl law. In other words, we have $\tilde c(p) \sim \tau_n \vol(M)^{n/(n+1)} p^{1/(n+1)}$ as $p\to \infty$. 
\end{theo}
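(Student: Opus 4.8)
The plan is to squeeze $\tilde c(p)$ between $c(p)$ and $c(p+1)$, mirroring exactly the proof of Theorem \ref{wl1}. The two inequalities needed are already in hand: Proposition \ref{Comp1} gives $c(p) \le \tilde c(p)$ and Proposition \ref{Comp2} gives $\tilde c(p) \le c(p+1)$, so that
\[
c(p) \le \tilde c(p) \le c(p+1)
\]
for every $p\in\N$.

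Next I would feed this chain into the Gaspar--Guaraco Weyl law for the phase transition spectrum, which asserts $c(p)\sim \tau_n \vol(M)^{n/(n+1)} p^{1/(n+1)}$ as $p\to\infty$. Dividing the displayed chain through by $\tau_n \vol(M)^{n/(n+1)} p^{1/(n+1)}$, the left-hand ratio tends to $1$ at once. For the right-hand ratio, write
\[
\frac{c(p+1)}{\tau_n \vol(M)^{n/(n+1)} p^{1/(n+1)}} = \frac{c(p+1)}{\tau_n \vol(M)^{n/(n+1)} (p+1)^{1/(n+1)}} \cdot \left(\frac{p+1}{p}\right)^{1/(n+1)},
\]
and note that the first factor tends to $1$ by the Weyl law applied along the shifted index, while the second tends to $1$ since $(p+1)/p \to 1$. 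Hence both bounding ratios converge to $1$, and the squeeze theorem forces
\[
\lim_{p\to\infty} \frac{\tilde c(p)}{\tau_n \vol(M)^{n/(n+1)} p^{1/(n+1)}} = 1,
\]
which is precisely the claimed Weyl law.

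There is essentially no obstacle in this final step: the only point requiring the slightest care is the index shift $p\mapsto p+1$ coming from Proposition \ref{Comp2}, and that is harmless because $(p+1)^{1/(n+1)} \sim p^{1/(n+1)}$. All of the substantive work has already been carried out --- in establishing Propositions \ref{Comp1} and \ref{Comp2} (especially the $\Z_2$-index computation in the proof of Proposition \ref{Comp2}, which uses the continuity, monotonicity, and subadditivity of the index) and in the Gaspar--Guaraco Weyl law itself.
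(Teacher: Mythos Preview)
Your proof is correct and matches the paper's own argument essentially line for line: both squeeze $\tilde c(p)$ between $c(p)$ and $c(p+1)$ via Propositions \ref{Comp1} and \ref{Comp2}, divide through by $\tau_n \vol(M)^{n/(n+1)} p^{1/(n+1)}$, and invoke the Gaspar--Guaraco Weyl law on each side. The only difference is cosmetic---you spell out the index-shift factorization $(p+1)^{1/(n+1)}/p^{1/(n+1)}\to 1$, whereas the paper simply asserts that the ratio with $c(p+1)$ tends to $1$.
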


\begin{proof}
By Propositions \ref{Comp1} and \ref{Comp2} we have 
\[
c(p) \le \tilde c(p) \le c(p+1). 
\]
By the Weyl law for the phase-transition spectrum, we have
\[
\lim_{p\to \infty} \frac{c(p)}{\tau_n \vol(M)^{n/(n+1)}p^{1/(n+1)}} = 1 \quad \text{ and } \quad \lim_{p\to \infty} \frac{c(p+1)}{\tau_n \vol(M)^{n/(n+1)}p^{1/(n+1)}} = 1 
\]
and therefore
\[
\lim_{p\to \infty} \frac{\tilde c(p)}{\tau_n \vol(M)^{n/(n+1)}p^{1/(n+1)}} = 1 
\]
as well. 
\end{proof}

\section{Surfaces Associated to the Half-Volume Spectrum}

In this section, we use the Allen-Cahn min-max theory to construct surfaces associated to the phase transition half-volume spectrum. The goal is to prove Theorem \ref{main}. Fix a closed Riemannian manifold $(M^{n+1},g)$ with $3\le n+1\le 7$. 
Fix a number $p\in \N$. In this section, we require the following additional hypothesis on the double-well potential $W$.
\begin{itemize}
\item[(vii)] There are constants $0 < C_1 < C_2$ and $\beta > 1$ and $2 < q < \frac{11}{5}$ such that 
\[
C_1 \vert x\vert^q \le W(x) \le C_2 \vert x\vert^q \quad \text{and} \quad C_1 \vert x\vert^{q-1} \le \vert W'(x)\vert \le C_2 \vert x\vert^{q-1}
\]
for all $\vert x\vert \ge \beta$. 
\end{itemize}
The first step of the proof is to construct, for each small enough $\eps > 0$, a critical point $u_\eps$ of $E_\eps$ subject to the volume constraint 
\[
\int_M u_\eps = 0.
\]
Given such a $u_\eps$, there is a Lagrange multiplier $\lambda_\eps\in \R$ such that $u_\eps$ is a critical point of 
\[
 F_{\eps,\lambda_\eps}(v) = E_\eps(v) + \lambda_\eps \int_M v
\]
on all of $W^{1,2}$. The construction of $u_\eps$ is similar to that of Gaspar and Guaraco \cite{GG2} in the unconstrained case. 

\begin{rem}
There are two purposes for imposing the growth condition (vii).  The first is that it allows us to verify the Palais-Smale condition with the volume constraint.  In the unconstrained case, one has 
\[
E_\eps(\max(\min(u,1),-1))\le E_\eps(u)
\]
and so by a truncation argument it is enough to verify the Palais-Smale condition along Palais-Smale sequences which are bounded in $L^\infty$.  See \cite{GG2} for more details.  However, truncation may not preserve the volume constraint.  In the volume constrained case, we instead rely on (vii) to show that $W'(u) \in L^2$ whenever $u \in W^{1,2}$.   The second purpose is to get uniform $L^\infty$ bounds on critical points of $F_{\eps,\lambda}$.  Given a sequence $\eps_k\to 0$ and critical points $u_{\eps_k}$ of $F_{\eps_k,\lambda_{\eps_k}}$ with $E_{\eps_k}(u_{\eps_k})\le C$, the growth condition (vii) implies that $\|u_{\eps_k}\|_{L^\infty} \le \beta$ provided $k$ is large enough. 
\end{rem}

We recall (see Proposition 4.4 in \cite{Gua}) that the first variation of $E_\eps$ is given by 
\[
DE_\eps(u)(v) = \int_M \frac \eps 2 \grad u \cdot \grad v + \frac{W'(u)}{\eps}v.
\]
Fix a number $\eps > 0$.  A sequence $A_k$ in $\mathcal G_p$ is called a critical sequence if 
\[
\lim_{k\to \infty} \left[\sup_{u\in A_k} E_\eps(u)\right] = 2\sigma \tilde c(\eps,p). 
\]
A sequence $u_k \in A_k$ is called a min-max sequence provided 
$
\lim_{k\to \infty} E_\eps(u_k) = 2\sigma \tilde c(\eps,p).
$

In the unconstrained case, it is not necessarily true that every min-max sequence is bounded in $W^{1,2}$.  However, one can obtain the existence of a bounded min-max sequence via a truncation argument.  See, for example, the remarks before Proposition 4.5 in \cite{Gua}.  We cannot employ truncation because it doesn't preserve the volume constraint.  Fortunately, in the volume constrained case, every min-max sequence is automatically bounded in $W^{1,2}$. 

\begin{prop}
\label{mmb}
Any min-max sequence $u_k$ is uniformly bounded in $W^{1,2}(M)$.
\end{prop}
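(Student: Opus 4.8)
The plan is to read the bound directly off the energy inequality and then invoke the Poincaré inequality, which is exactly what the volume constraint makes available. First, since $A_k$ is a critical sequence, $\sup_{u\in A_k}E_\eps(u)\to 2\sigma\tilde c(\eps,p)$, and $\tilde c(\eps,p)<\infty$ (test with the unit sphere of any $(p+1)$-dimensional linear subspace of $Y$, which is a compact invariant set of $\Z_2$-index $p+1$ on which $E_\eps$ is bounded, using growth condition (vii) and the Sobolev embedding to see $E_\eps$ is finite on $W^{1,2}$). Hence there is a constant $C=C(\eps,p)$, independent of $k$, with $E_\eps(u_k)\le C$ for all $k$. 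By the definition of $E_\eps$ this forces
\[
\frac{\eps}{2}\int_M|\grad u_k|^2\le E_\eps(u_k)\le C,
\]
so $\int_M|\grad u_k|^2\le 2C/\eps$ uniformly in $k$.

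Next I would use that $u_k\in A_k\in\mathcal G_p$ forces $u_k\in Y$, i.e.\ $\int_M u_k=0$. On the closed manifold $M$ the Poincaré--Wirtinger inequality provides a constant $C_P=C_P(M,g)$ depending only on the geometry of $M$ (concretely $C_P=\lambda_1(M)^{-1}$) such that $\int_M v^2\le C_P\int_M|\grad v|^2$ for every $v\in W^{1,2}(M)$ with $\int_M v=0$. Applying this to $v=u_k$ gives $\int_M u_k^2\le C_P\int_M|\grad u_k|^2\le 2CC_P/\eps$. Combining the two estimates,
\[
\|u_k\|_{W^{1,2}(M)}^2=\int_M u_k^2+\int_M|\grad u_k|^2\le (1+C_P)\,\frac{2C}{\eps},
\]
which is the asserted uniform bound.

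There is no essential obstacle here: the whole argument is a two-line consequence of the energy bound together with Poincaré, and the one thing to check—that the Poincaré constant is uniform—is automatic since it depends only on $(M,g)$. The point worth stressing is structural: it is precisely the constraint $\int_M u_k=0$ that licenses the Poincaré inequality and thereby removes the non-coercive ``constant direction'' of $E_\eps$ near the wells $\pm1$. This is exactly why the volume-constrained min-max problem is, in this respect, better behaved than the unconstrained one, where one must instead replace a given min-max sequence by a truncated, hence $L^\infty$- and so $W^{1,2}$-bounded, one.
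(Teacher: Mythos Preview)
Your proof is correct and follows essentially the same route as the paper: use $W\ge 0$ together with the energy bound $E_\eps(u_k)\le C$ to control $\|\grad u_k\|_{L^2}$, and then invoke the Poincar\'e inequality (available precisely because $u_k\in Y$ has zero mean) to control $\|u_k\|_{L^2}$. The paper's proof is just a terser version of yours, and your additional remarks on why $\tilde c(\eps,p)<\infty$ and on the contrast with the unconstrained case are accurate elaborations.
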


\begin{proof}
Assume that $u\in Y$ satisfies $E_\eps(u) \le K$. Since $W\ge 0$, it follows immediately that 
\[
\int_M \vert \grad u\vert^2 \le \frac{2 K}{\eps}. 
\]
Since $u$ has average $0$, the Poincare inequality implies that $\|u\|_{W^{1,2}} \le CK/\eps$.  This proves the result. 
\end{proof}

\begin{prop}
Assume that $u_k$ is a sequence uniformly bounded in $W^{1,2}$. Then $W'(u_k)$ is uniformly bounded in $L^2$. 
\end{prop}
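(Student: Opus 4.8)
The plan is to exploit the growth condition (vii) to control $W'$ at infinity, and to use the Sobolev embedding $W^{1,2}(M) \hookrightarrow L^{2^*}(M)$ (with $2^* = \frac{2(n+1)}{n-1}$ when $n+1 \ge 3$) to convert an $L^{2^*}$ bound on $u_k$ into an $L^2$ bound on $W'(u_k)$. First I would split $M$ into the region $\{|u_k| \le \beta\}$ and the region $\{|u_k| > \beta\}$. On the former, $W'$ is continuous on the compact interval $[-\beta,\beta]$, hence bounded by some constant $C_\beta$, so $\int_{\{|u_k|\le \beta\}} W'(u_k)^2 \le C_\beta^2 \vol(M)$ uniformly. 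On the latter region, (vii) gives $|W'(u_k)| \le C_2 |u_k|^{q-1}$, so
\[
\int_{\{|u_k| > \beta\}} W'(u_k)^2 \le C_2^2 \int_M |u_k|^{2(q-1)}.
\]
The task is therefore to bound $\int_M |u_k|^{2(q-1)}$ uniformly.

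The key point is the choice $2 < q < \frac{11}{5}$. In ambient dimension $n+1 = 3$ (the worst case for the Sobolev exponent in the allowed range $3 \le n+1 \le 7$), one has $2^* = 6$, and $2(q-1) < 2 \cdot \frac{6}{5} = \frac{12}{5} < 6 = 2^*$. More generally, since $2^* \ge 6$ for $3 \le n+1 \le 7$ and $2(q-1) < \frac{12}{5} < 6$, we always have $2(q-1) < 2^*$. Hence $|u_k|^{2(q-1)} \in L^{2^*/(2(q-1))}(M)$ with exponent $> 1$, and by Hölder's inequality on the finite-measure space $M$,
\[
\int_M |u_k|^{2(q-1)} \le \vol(M)^{1 - \frac{2(q-1)}{2^*}} \left(\int_M |u_k|^{2^*}\right)^{\frac{2(q-1)}{2^*}} = \vol(M)^{1-\frac{2(q-1)}{2^*}} \|u_k\|_{L^{2^*}}^{2(q-1)}.
\]
Since $u_k$ is uniformly bounded in $W^{1,2}(M)$ and $W^{1,2}(M)$ embeds continuously in $L^{2^*}(M)$, the right-hand side is uniformly bounded. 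Combining the two regions gives a uniform bound on $\|W'(u_k)\|_{L^2(M)}$.

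I expect the only real subtlety is bookkeeping the Sobolev exponent correctly across the range $3 \le n+1 \le 7$ and checking that $2(q-1)$ genuinely stays below $2^*$ for every such dimension — this is exactly what the numerical restriction $q < \frac{11}{5}$ buys, and it is presumably also why that bound was imposed in hypothesis (vii). There is no compactness or limiting argument needed here; it is a clean chain of elementary inequalities (continuity of $W'$ on compacts, the growth bound (vii), Hölder on a finite-measure space, and the Sobolev embedding), so beyond getting the exponents right the proof should be short.
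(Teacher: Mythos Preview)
Your approach is essentially identical to the paper's: split into $\{|u_k|\le\beta\}$ and $\{|u_k|>\beta\}$, bound trivially on the first region, and use (vii) together with Sobolev embedding on the second. However, you have the direction of the Sobolev exponent backwards: $2^* = \frac{2(n+1)}{n-1}$ \emph{decreases} as $n+1$ increases, so the worst case in the range $3\le n+1\le 7$ is $n+1=7$, where $2^* = \frac{14}{5}$, not $n+1=3$; your claim that $2^*\ge 6$ throughout is therefore false. Fortunately the argument survives, since one still has $2(q-1) < \tfrac{12}{5} < \tfrac{14}{5} \le 2^*$ for every admissible dimension. The paper simply embeds $W^{1,2}$ into $L^{12/5}$ (valid uniformly for $3\le n+1\le 7$) and uses $|W'(u_k)|^2 \le C|u_k|^{12/5}$ directly, which sidesteps the dimension-by-dimension bookkeeping altogether.
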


\begin{proof}
By assumption the sequence $u_k$ is uniformly bounded in $W^{1,2}$.  As $3\le n+1 \le 7$, the Sobolev embedding theorem implies that $u_k$ is uniformly bounded in $L^{12/5}$. Now $\vert W'(u_k)\vert \le C \vert u_k\vert ^{q-1}\le C \vert u_k\vert^{6/5}$ whenever $\vert u_k\vert \ge \beta$. Therefore 
\begin{align*}
\int_M W'(u_k)^2 &= \int_{\vert u_k\vert\le \beta} W'(u_k)^2 + \int_{\vert u_k\vert > \beta} W'(u_k)^2\\
&\le C\vol(M) + C \int_M \vert u_k\vert^{12/5},
\end{align*}
and it follows that $W'(u_k)$ is uniformly bounded in $L^2$. 
\end{proof}

 The functional $E_\eps|_Y$ satisfies the Palais-Smale condition. See \cite{Gua} Proposition 4.4 for the proof without a volume constraint. 
 
\begin{prop}
The functional $E_\eps|_Y$ satisfies the Palais-Smale condition.  More precisely, assume that $u_k$ is a bounded sequence in $Y$  and that $\|DE_\eps|_Y(u_k)\| \to 0$. Then a subsequence of $u_k$ converges strongly to a limit $u \in Y$. 
\end{prop}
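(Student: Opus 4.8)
The plan is to run the standard argument for verifying the Palais--Smale condition for a functional of the form ``Dirichlet energy plus a lower order term'', the one new feature being that the truncation step available in the unconstrained setting must be replaced by the $L^2$-bound on $W'(u_k)$ supplied by the preceding proposition.

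First I would pass to a weakly convergent subsequence. Since $\{u_k\}$ is bounded in $W^{1,2}(M)$, after extracting a subsequence we may assume $u_k \weak u$ weakly in $W^{1,2}(M)$ for some $u\in W^{1,2}(M)$; by the compactness of the Rellich--Kondrachov embedding $W^{1,2}(M) \inc L^2(M)$, a further subsequence satisfies $u_k \to u$ strongly in $L^2(M)$. Since $v\mapsto \int_M v$ is continuous on $L^2(M)$, passing to the limit in $\int_M u_k = 0$ gives $\int_M u = 0$, so $u\in Y$. As $Y$ is closed in $W^{1,2}(M)$ and we already have $L^2$-convergence, it remains only to upgrade this to $\grad u_k \to \grad u$ in $L^2(M)$.

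The key step is to feed $v_k := u_k - u$ into the differential. Because $Y$ is a linear subspace of $W^{1,2}(M)$, we have $v_k\in Y$ and $DE_\eps|_Y(u_k)(v_k) = DE_\eps(u_k)(v_k)$; moreover $\{v_k\}$ is bounded in $W^{1,2}(M)$, so the assumption $\|DE_\eps|_Y(u_k)\|\to 0$ forces $DE_\eps(u_k)(v_k) \to 0$. By the first variation formula recalled above,
\[
DE_\eps(u_k)(v_k) = \frac{\eps}{2}\int_M \grad u_k \cdot \grad v_k + \frac{1}{\eps}\int_M W'(u_k)\, v_k.
\]
By the preceding proposition $W'(u_k)$ is bounded in $L^2(M)$, so Cauchy--Schwarz together with $v_k \to 0$ in $L^2(M)$ yields $\int_M W'(u_k)\,v_k \to 0$; hence $\int_M \grad u_k \cdot \grad v_k \to 0$. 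On the other hand $\grad u_k \weak \grad u$ weakly in $L^2(M)$, so $\int_M \grad u \cdot \grad v_k \to 0$ as well. Subtracting the two limits gives $\int_M |\grad v_k|^2 \to 0$, which is precisely $\grad u_k \to \grad u$ in $L^2(M)$; combined with the $L^2$-convergence this shows $u_k \to u$ strongly in $W^{1,2}(M)$, with $u\in Y$.

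The only point demanding real care --- and the reason hypothesis (vii) is imposed --- is the control of the nonlinear term $\int_M W'(u_k)\, v_k$: in the unconstrained case one would first truncate $u_k$ to be bounded in $L^\infty$, but truncation destroys the constraint $\int_M u_k = 0$, so one must instead invoke the $L^2$-boundedness of $W'(u_k)$, which is exactly what the two preceding propositions provide (boundedness of $u_k$ in $W^{1,2}$, the Sobolev embedding valid for $3\le n+1\le 7$, and the polynomial growth of $W'$). One should also note at the outset that $E_\eps|_Y$ is genuinely $C^1$ on $Y$, so that $DE_\eps|_Y(u_k)$ and its norm make sense; this follows from $W'(u)\in L^2(M)$ for $u\in W^{1,2}(M)$, exactly as in Proposition 4.4 of \cite{Gua}.
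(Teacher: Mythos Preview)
Your proof is correct and follows essentially the same route as the paper: extract a weakly convergent subsequence, use the Palais--Smale hypothesis on the test vector $v_k = u_k - u \in Y$, kill the nonlinear term via the $L^2$-bound on $W'(u_k)$ from the preceding proposition together with $v_k\to 0$ in $L^2$, and conclude $\|\grad v_k\|_{L^2}\to 0$. The only cosmetic difference is that the paper subtracts $DE_\eps|_Y(u)(u_k-u)$ rather than invoking weak convergence of $\grad u_k$ directly, but these amount to the same computation.
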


\begin{proof}
Assume that $u_k$ is a bounded sequence in $Y$ such that $\|DE_\eps|_Y(u_k)\|\to 0$. We need to show that some subsequence of $u_k$ converges strongly to a limit $u\in Y$. Note that $Y$ is closed and convex in $W^{1,2}(M)$ and so $Y$ is weakly closed. Thus, passing to a subsequence, we can assume that $u_k$ converges weakly in $W^{1,2}$ and strongly in $L^{12/5}$ to a point $u\in Y$.   

Observe that 
\begin{align*}
DE_\eps|_Y(u)(u_k-u) = \int_M \eps \grad u \cdot \grad (u_k-u) + \int_M \frac{W'(u)}{\eps} (u_k-u). 
\end{align*}
The first term on the right hand side goes to 0 by the weak convergence $u_k \weak u$.  Note that $W'(u) \in L^2$ since $u\in L^{12/5}$.  Therefore the second term on the right hand side also goes to 0 since $u_k \to u$ in $L^2$.  Thus we obtain 
\[
DE_\eps|_Y(u)(u_k-u) \to 0, \quad \text{as } k\to\infty.
\]
Also note that $DE_\eps|_Y(u_k)(u_k-u) \to 0$ since $\|DE_\eps|_Y(u_k)\|\to 0$ and $u_k-u$ is uniformly bounded in $W^{1,2}$.  On the other hand, 
\[
DE_\eps|_Y(u_k)(u_k-u) = \int_M \eps \grad u_k\cdot \grad(u_k-u) + \int_M \frac{W'(u_k)}{\eps}(u_k-u). 
\]
The second term on the right hand side goes to 0 as $W'(u_k)$ is uniformly bounded in $L^2$ and $u_k-u\to 0$ in $L^2$. 

Now observe that
\begin{align*}
&DE_\eps|_Y(u_k)(u_k-u) - DE_\eps|_Y(u)(u_k-u) \\
&\qquad = \int_M \eps \vert \grad u_k - \grad u\vert^2 + \int_M \frac{W'(u_k)}{\eps}(u_k-u) - \int_M \frac{W'(u)}{\eps}(u_k-u).
\end{align*}
We have already seen that every term in this formula goes to 0 except $\int_M \eps \vert \grad u_k - \grad u\vert^2$, and therefore $\int_M \eps \vert \grad u_k - \grad u\vert^2$ goes to 0 as well. This proves that $u_k \to u$ strongly in $W^{1,2}$, as needed. 
\end{proof}
 
 According to Gaspar and Guaraco \cite{GG2}, for each given $p$, we have $2\sigma c(\eps,p+1) < E_\eps(0)$ provided $\eps$ is small enough.  Therefore, we also have $2 \sigma \tilde c(\eps,p) < E_\eps(0)$ provided $\eps$ is small enough.  Hence, for $\eps$ small enough, any min-max sequence remains bounded away from 0. By the classical theory for functionals satisfying the Palais-Smale condition (see \cite{Stru}), we get the following existence result for critical points of $E_\eps|_Y$.  See Theorem 3.3 in \cite{GG2} for the corresponding result in the unconstrained case. 

\begin{prop} 
\label{ace} Fix $p \in \N$.  For all small enough $\eps$, there is a critical point $u_\eps \in Y$ of $E_\eps|_Y$ with $E_\eps(u_\eps) = \tilde c(\eps,p)$.   There is a number $\lambda_\eps \in \R$ such that $u_\eps$ is a critical point of 
\[
v\mapsto F_{\eps,\lambda_\eps}(v) = E_\eps(v) + \lambda_\eps \int_M v
\]
on all of $W^{1,2}$, and $u_\eps$ satisfies the PDE 
\[
- \eps  \lap u_\eps + \frac{W'(u_\eps)}{\eps} = \lambda_\eps
\]
in the weak sense. Moreover, we have $\int_M u_\eps = 0$. The index of $u_\eps$ as a critical point of $E_\eps|_ Y$ is at most $p$.  
\end{prop}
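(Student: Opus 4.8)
The plan is to produce $u_\eps$ by the classical equivariant min-max theory for $C^1$ functionals satisfying the Palais--Smale condition, applied to $E_\eps|_Y$ on the Hilbert space $Y$ (on which $u \mapsto -u$ acts freely away from $0$), and then to read off the Lagrange multiplier, the PDE, and the volume constraint by elementary manipulations. First I would record that $E_\eps|_Y$ is $C^1$, indeed $C^2$, on $Y$: the Dirichlet part is smooth, and the preceding propositions show that for every $u \in W^{1,2}(M)$ one has $W'(u) \in L^2(M)$, which (by compactness of $M$) embeds into the dual of $W^{1,2}(M)$, so $v \mapsto \eps^{-1}\int_M W'(u)\,v$ is a bounded linear functional; the subcritical growth in (vii) makes the relevant Nemytskii operators $u \mapsto W'(u)$ and $u \mapsto W''(u)$ continuous, giving $E_\eps \in C^2(W^{1,2}(M))$ with $DE_\eps(u)(v) = \int_M \eps\,\grad u\cdot\grad v + \eps^{-1}W'(u)v$. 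Restricting to the closed linear subspace $Y$ preserves all of this, and in particular the Morse index of a critical point of $E_\eps|_Y$ is well defined.

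Next comes the min-max step, which is the heart of the matter. We already know that $E_\eps|_Y \ge 0$ is bounded below, that it satisfies Palais--Smale, and that $2\sigma\tilde c(\eps,p) < E_\eps(0)$ for $\eps$ small, so that every min-max sequence for $\mathcal G_p$ stays inside a fixed sublevel set and in particular bounded away from $0 \in Y$, where the $\Z_2$-action is free. Since $\mathcal G_p$ consists of compact invariant subsets of $Y\setminus\{0\}$ of $\Z_2$-index $\ge p+1$ and $2\sigma\tilde c(\eps,p)$ is the associated min-max value, I would run a $\Z_2$-equivariant pseudo-gradient flow of $E_\eps|_Y$ fixing $0$ and apply the deformation lemma together with the monotonicity of the $\Z_2$-index: this shows $2\sigma\tilde c(\eps,p)$ is a critical value and yields $u_\eps \in Y\setminus\{0\}$ with $DE_\eps|_Y(u_\eps)=0$ and $E_\eps(u_\eps) = 2\sigma\tilde c(\eps,p)$. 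Following the refinement in \cite{GG2} --- the version of the deformation lemma that also controls the dimension of the negative subspace --- one arranges that the Morse index of $u_\eps$ as a critical point of $E_\eps|_Y$ is at most $p$: were every critical point at this level of index $\ge p+1$, one could push a near-optimal competitor strictly below the level while dropping its $\Z_2$-index only by the amount contributed by those negative directions, contradicting that every competitor in $\mathcal G_p$ has index $\ge p+1$.

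It then remains to extract the Lagrange multiplier and the PDE. The subspace $Y$ is the kernel of the nonzero continuous linear functional $G(u) = \int_M u$ on $W^{1,2}(M)$, with $DG \equiv G$. Since $u_\eps$ is a critical point of the $C^1$ functional $E_\eps$ restricted to $Y$, the functional $DE_\eps(u_\eps)$ vanishes on $\ker G$ and is therefore a scalar multiple of $G$: there is $\lambda_\eps \in \R$ with $DE_\eps(u_\eps) = -\lambda_\eps\,G$ on all of $W^{1,2}(M)$, i.e.\ $u_\eps$ is a critical point of $F_{\eps,\lambda_\eps} = E_\eps + \lambda_\eps\int_M(\cdot)$ on $W^{1,2}(M)$. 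Writing this out, $\int_M \eps\,\grad u_\eps\cdot\grad v + \eps^{-1}W'(u_\eps)v + \lambda_\eps v = 0$ for all $v\in W^{1,2}(M)$, which is precisely the weak form of $-\eps\lap u_\eps + \eps^{-1}W'(u_\eps) = \lambda_\eps$; and $\int_M u_\eps = 0$ holds because $u_\eps \in Y$ by construction.

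The hard part will be the min-max step, and especially the index estimate: it needs the equivariant deformation lemma for $E_\eps|_Y$ (hence the $C^1$ regularity and the Palais--Smale property established above) together with the bookkeeping relating the $\Z_2$-index of competitors to the Morse index of the critical point, carried out as in the unconstrained treatment of Gaspar and Guaraco \cite{GG2}. The volume constraint enters only through the requirement that every deformation stay in $Y$, which is automatic since $Y$ is a linear subspace and hence invariant under the (pseudo-)gradient flow of $E_\eps|_Y$; everything else --- the regularity of $E_\eps$, the nonvanishing of $G$, and the unwinding of the Euler--Lagrange equation --- is routine.
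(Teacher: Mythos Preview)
Your proposal is correct and follows exactly the route the paper takes: the paper does not give a self-contained argument but simply invokes the classical equivariant min-max theory for $C^1$ functionals satisfying Palais--Smale (citing \cite{Stru}) together with the index refinement from \cite{GG2}, after having verified Palais--Smale and the inequality $2\sigma\tilde c(\eps,p) < E_\eps(0)$ in the preceding paragraphs. Your write-up just spells out those ingredients and the extraction of the Lagrange multiplier; note incidentally that your conclusion $E_\eps(u_\eps)=2\sigma\,\tilde c(\eps,p)$ is the one consistent with the paper's definition of $\tilde c(\eps,p)$, and the missing factor $2\sigma$ in the proposition as stated is a typo.
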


Given the existence of $u_\eps$, the second step in the proof is to study the convergence of $u_\eps$ as $\eps \to 0$.  Fortunately for us, Bellettini and Wickramasekera \cite{BelWic} have already studied the regularity of such limits.  Let us recall the setup in \cite{BelWic}.   For each $\eps > 0$, suppose $u_{\eps}$ is a critical point of $F_{\eps,\lambda_{\eps}}$ and that $E_\eps(u_\eps)$ and $\ind_{F_{\eps,\lambda_\eps}}(u_\eps)$ and $\|u_{\eps_k}\|_{L^\infty}$ are uniformly bounded.  Choose a sequence $\eps_j\to 0$ and assume that $\lambda_{\eps_j} \to \lambda$.  Passing to a subsequence if necessary, there exist a radon measure $\mu$ on $M$ and a function $u_\infty \in BV(M)$ such that 
\[
\frac{1}{2\sigma}\left(\frac{\eps_j}{2}\vert \grad u_{\eps_j}\vert^2 + \frac{W(u_{\eps_j})}{\eps_j}\right) \weak \mu
\]
and $u_{\eps_j} \to u_\infty$ in $L^1$. Moreover, $u_\infty$ takes only the values $\pm 1$. Hutchinson and Tonegawa \cite{HT} proved that there is an integral varifold $V$ on $M$ such that $\|V\| = \mu$.  The following is the special case of Theorem 4.1 in Bellettini and Wickramasekera \cite{BelWic} where the prescription functions are assumed to be constants and the ambient dimension is assumed to be between 3 and 7. 

\begin{theorem}[See Theorem 4.1 in \cite{BelWic}]
\label{bw}
Let $(M^{n+1},g)$ be a closed Riemannian manifold with $3\le n+1\le 7$.  Assume $(u_{\eps_j})$ is a sequence as above, and assume that $\lambda > 0$. Let $\Omega = \operatorname{int}(\{x\in M:\, u_\infty(x)= 1\})$.   Then $V = V_0 + V_\lambda$ where 
\begin{itemize}
\item[(i)] $V_0$ is induced by a collection of smooth, disjoint minimal surfaces equipped with even multiplicities.  Moreover, $\operatorname{spt}(\|V_0\|) \subset M\setminus \Omega$. 
\item[(ii)] If $\Omega = \emptyset$ then $V_\lambda = 0$. If $\Omega \neq \emptyset$ then $V_\lambda = \vert \bd^\star \Omega\vert \neq 0$, and moreover, $V_\lambda$ is induced by a smooth surface with constant mean curvature $\lambda$ whose mean curvature vector points into $\Omega$. 
\end{itemize}
The minimal surfaces may have tangential intersection with the CMC surface.  Likewise, the CMC surface may have tangential intersections with itself but it never crosses itself. 
\end{theorem}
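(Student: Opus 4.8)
The plan is to obtain Theorem \ref{bw} as a direct specialization of [BelWic, Theorem 4.1], which analyzes the limit as $\eps \to 0$ of critical points of the inhomogeneous Allen--Cahn functional $F_{\eps,g}(v) = E_\eps(v) + \int_M g\, v$ for a general prescription function $g$. The point is that the Lagrange multipliers $\lambda_{\eps_j}$ play exactly the role of the prescription functions, which in our situation happen to be constants. So the first task is to check that the sequence $(u_{\eps_j})$ described before the theorem genuinely satisfies the hypotheses of [BelWic, Theorem 4.1] with $g \equiv \lambda_{\eps_j}$, and the second task is to read off the conclusion in the special case that $g$ is a nonzero constant and $3 \le n+1 \le 7$.

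For the first task I would verify: (a) the energy $E_{\eps_j}(u_{\eps_j})$ is uniformly bounded, which is built into the hypotheses of Theorem \ref{bw}; (b) the Morse index of $u_{\eps_j}$ as a critical point of $F_{\eps_j,\lambda_{\eps_j}}$ on all of $W^{1,2}(M)$ is uniformly bounded --- this follows from Proposition \ref{ace}, since $F_{\eps,\lambda_\eps}$ and $E_\eps|_Y$ have the same second variation on $Y$ and $Y$ is a closed subspace of codimension one, so the index on $W^{1,2}$ exceeds the index on $Y$ by at most one and is hence at most $p+1$; (c) the uniform bound $\|u_{\eps_j}\|_{L^\infty}\le\beta$, which is exactly what the growth condition (vii) provides (see the Remark following Theorem \ref{main}); and (d) $\lambda_{\eps_j}\to\lambda>0$, which is among the hypotheses. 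Passing to a further subsequence, Hutchinson--Tonegawa \cite{HT} then furnishes the integral varifold $V$ with $\|V\|=\mu$ and $u_{\eps_j}\to u_\infty\in BV(M;\{\pm1\})$ in $L^1$, so the input required by [BelWic, Theorem 4.1] is in place.

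For the second task I would specialize the conclusion of [BelWic, Theorem 4.1]. That theorem decomposes $V$ into a stationary (minimal) piece and a piece whose generalized mean curvature equals the prescription; with $\Omega = \operatorname{int}(\{u_\infty = 1\})$, the prescribed-mean-curvature piece is $V_\lambda = \vert\bd^\star\Omega\vert$ with multiplicity one and mean curvature vector of magnitude $\lambda$ pointing into $\Omega$ (and $V_\lambda = 0$ precisely when $\Omega = \emptyset$, i.e.\ when $u_\infty\equiv -1$ and the reduced boundary is empty), while the minimal piece $V_0$ carries even multiplicities --- the evenness is part of the BW conclusion and reflects the fact that $u_\infty$ does not jump across the sheets of $V_0$, so the adjacent phases agree and the local orientation classes cancel. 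Since $\Omega$ is open and carries no interface, neither piece charges $\Omega$, so $\operatorname{spt}\|V_0\|\subset M\setminus\Omega$. Finally, because $3\le n+1\le 7$, the varifold regularity theory underlying [BelWic, Theorem 4.1] --- Wickramasekera's sheeting and regularity theorem for stable minimal hypersurfaces for the minimal part, and the De~Giorgi--Allard type regularity for the prescribed-mean-curvature part --- upgrades both pieces to smooth embedded hypersurfaces with empty singular set, since in this dimension range the singular set of a stable minimal hypersurface is empty. The assertions about tangential intersections and the CMC surface never crossing itself are quoted directly from the corresponding statements in [BelWic, Theorem 4.1].

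The main obstacle, such as it is, is not new mathematics but confirming that our $(u_{\eps_j})$ matches the precise hypotheses of [BelWic, Theorem 4.1]: that constant prescription functions are admissible there, that the Morse index is counted in the same way, and that the sign convention for ``mean curvature pointing into $\Omega$'' is compatible with the choice $\Omega = \operatorname{int}(\{u_\infty = 1\})$ rather than $\{u_\infty = -1\}$. One must also respect the standing assumption $\lambda > 0$ in Theorem \ref{bw}: the case $\lambda = 0$, which would force $V=V_0$ to be entirely minimal, is genuinely different and is excluded here; it will have to be handled separately when Theorem \ref{bw} is applied in the proof of Theorem \ref{main}.
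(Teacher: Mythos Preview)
Your proposal is correct and matches the paper's approach: the paper does not prove Theorem \ref{bw} at all but simply states it as ``the special case of Theorem 4.1 in Bellettini and Wickramasekera \cite{BelWic} where the prescription functions are assumed to be constants and the ambient dimension is assumed to be between 3 and 7.'' Your write-up is a more explicit unpacking of exactly that specialization. One small clarification: the checks you perform in (b) and (c) --- bounding the Morse index via Proposition \ref{ace} and the $L^\infty$-norm via the growth condition --- are not actually needed to deduce Theorem \ref{bw} from \cite{BelWic}, since those bounds are already listed as standing hypotheses in the setup paragraph preceding Theorem \ref{bw}; they belong instead to the proof of Theorem \ref{main}, where one must verify that the particular sequence produced by Proposition \ref{ace} satisfies those hypotheses.
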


We can now complete the proof of Theorem \ref{main}. 

\begin{theo}
\label{main}
Let $(M^{n+1},g)$ be a closed Riemannian manifold with $3\le n+1\le 7$. Fix a number $p\in \N$. There are
\begin{itemize}
\item[(i)] a Caccioppoli set $\Omega\subset M$ with $\vol(\Omega) = \frac 1 2 \vol(M)$ whose boundary is smooth and almost-embedded with constant mean curvature,
\item[(ii)]  a (possibly empty) collection of smooth, disjoint minimal surfaces $\Sigma_1,\hdots,\Sigma_k \subset M \setminus \Omega$, 
\item[(iii)] and positive integers $\theta_0 \in \Z$ and $\theta_1,\hdots,\theta_k\in 2\Z$ 
\end{itemize}
such that
$
\tilde c(p) = \theta_0\area(\bd \Omega) + \theta_1\area(\Sigma_1) + \hdots + \theta_k\area(\Sigma_k).
$
Moreover, $\theta_0 = 1$ unless $\bd \Omega$ is also a minimal surface. 
\end{theo}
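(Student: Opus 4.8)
The plan is to run the volume-constrained Allen--Cahn min--max construction, let $\eps\to 0$, and feed the resulting sequence of constrained critical points into the Bellettini--Wickramasekera theorem (Theorem~\ref{bw}). To begin, choose a sequence $\eps_j\to 0$ realizing the $\liminf$ in the definition of $\tilde c(p)$, so that $\tilde c(\eps_j,p)\to\tilde c(p)$. By Proposition~\ref{ace}, for all large $j$ there is a critical point $u_{\eps_j}\in Y$ of $E_{\eps_j}|_Y$ with $E_{\eps_j}(u_{\eps_j})=2\sigma\,\tilde c(\eps_j,p)$, with $\int_M u_{\eps_j}=0$, with index at most $p$ as a critical point of $E_{\eps_j}|_Y$, and with a Lagrange multiplier $\lambda_{\eps_j}\in\R$ such that $u_{\eps_j}$ is a critical point of $F_{\eps_j,\lambda_{\eps_j}}$ on all of $W^{1,2}(M)$.

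Next I would check the hypotheses needed to apply Theorem~\ref{bw} to $(u_{\eps_j})$. The energies $E_{\eps_j}(u_{\eps_j})=2\sigma\,\tilde c(\eps_j,p)$ are uniformly bounded since they converge to $2\sigma\,\tilde c(p)$. The bound $\|u_{\eps_j}\|_{L^\infty}\le\beta$ for large $j$ is exactly what growth hypothesis~(vii) provides, as recorded in the Remark following~(vii). For the Morse index, note that $D^2F_{\eps_j,\lambda_{\eps_j}}(u_{\eps_j})=D^2E_{\eps_j}(u_{\eps_j})$ because the constraint term $\lambda_{\eps_j}\int_M v$ is linear, and that $Y$ has codimension one in $W^{1,2}(M)$; hence any subspace on which $F_{\eps_j,\lambda_{\eps_j}}$ has negative second variation meets $Y$ in a subspace of dimension at least one less, so the index of $u_{\eps_j}$ as a critical point of $F_{\eps_j,\lambda_{\eps_j}}$ is at most $p+1$, uniformly in $j$. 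The remaining, and most delicate, point is a uniform bound $|\lambda_{\eps_j}|\le C$, which is what allows one to pass to a subsequence with $\lambda_{\eps_j}\to\lambda$. Integrating the Euler--Lagrange equation over the closed manifold gives $\lambda_{\eps_j}=\frac{1}{\eps_j\vol(M)}\int_M W'(u_{\eps_j})$, so the task is to bound $\eps_j^{-1}\int_M W'(u_{\eps_j})$ for these volume-constrained, energy-, index-, and $L^\infty$-bounded critical points. I expect this estimate to be the main obstacle; it should follow from a blow-up analysis at the transition region together with the global energy bound, or from a comparison argument controlling how the constrained min--max value changes under a small change of the volume constraint.

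Granting the uniform bound on $\lambda_{\eps_j}$, pass to a subsequence with $\lambda_{\eps_j}\to\lambda$. Since $W$ is even, replacing $u_{\eps_j}$ by $-u_{\eps_j}$ preserves the energy, the constraint $\int_M u=0$, and the index while negating $\lambda_{\eps_j}$, so after this substitution we may assume $\lambda\ge 0$. Suppose first that $\lambda>0$. Then $(u_{\eps_j})$ is a sequence of exactly the type considered in the paragraph preceding Theorem~\ref{bw}, so, after passing to a further subsequence, the normalized energy densities converge to a Radon measure $\mu=\|V\|$ for an integral varifold $V$, and $u_{\eps_j}\to u_\infty$ in $L^1$ with $u_\infty\in\{\pm1\}$ a.e. Because $\int_M u_{\eps_j}=0$, the $L^1$ convergence forces $\int_M u_\infty=0$, hence $\vol(\{u_\infty=1\})=\hv>0$, so $\Omega:=\inter(\{u_\infty=1\})$ is nonempty. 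Theorem~\ref{bw} then gives $V=V_0+V_\lambda$, where $V_\lambda=|\bd^\star\Omega|$ is a multiplicity-one, smooth, almost-embedded hypersurface with constant mean curvature $\lambda$, and $V_0$ is carried by smooth, pairwise disjoint minimal surfaces $\Sigma_1,\ldots,\Sigma_k\subset M\setminus\Omega$ with even multiplicities $\theta_1,\ldots,\theta_k\in 2\Z$. In particular $\bd\Omega$ is smooth, so $\Omega$ is a Caccioppoli set, and $\vol(\Omega)=\hv$ since $\Omega$ and $\{u_\infty=1\}$ differ only by a subset of the smooth hypersurface $\bd\Omega$. Testing the weak convergence against the constant function $1$ on the closed manifold $M$ yields $\mu(M)=\lim_j\tilde c(\eps_j,p)=\tilde c(p)$, and since $\|V_0+V_\lambda\|=\|V_0\|+\|V_\lambda\|$ we get $\tilde c(p)=\area(\bd\Omega)+\sum_{i=1}^k\theta_i\area(\Sigma_i)$, which is the asserted identity with $\theta_0=1$.

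It remains to handle the case $\lambda=0$, which is precisely the case in which $\bd\Omega$ turns out to be minimal. In this case $u_{\eps_j}$ is asymptotically an unconstrained Allen--Cahn critical point, and the limit is governed by the classical phase-transition regularity theory (Hutchinson--Tonegawa together with the regularity of index-bounded limits, which applies since $3\le n+1\le 7$): $V$ is a smooth stationary integral varifold, $\Omega=\inter(\{u_\infty=1\})$ still has volume $\hv$, and $\bd\Omega$ is a union of minimal components of $\operatorname{spt}(\|V\|)$. Reading off the masses as before gives $\tilde c(p)=\theta_0\area(\bd\Omega)+\sum_i\theta_i\area(\Sigma_i)$ with $\theta_0$ now only a positive integer---a minimal component can be shared between $\bd\Omega$ and $V_0$, so $\theta_0$ need not equal $1$---and with the remaining $\theta_i$ even; this is the ``unless'' clause. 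The single genuinely analytic ingredient in the whole argument is the uniform bound on the multipliers $\lambda_{\eps_j}$; everything else is assembled from Proposition~\ref{ace} and Theorem~\ref{bw}.
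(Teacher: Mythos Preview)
Your outline is essentially the paper's proof: construct the constrained critical points via Proposition~\ref{ace}, verify uniform energy, index, $L^\infty$, and multiplier bounds, then apply Theorem~\ref{bw}; your additional case split $\lambda>0$ versus $\lambda=0$, the symmetry reduction to $\lambda\ge 0$, and the observation that $\int_M u_\infty=0$ forces $\Omega\neq\emptyset$ are correct elaborations that the paper leaves implicit.

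Two points of comparison are worth noting. First, the multiplier bound---which you rightly flag as the only nontrivial analytic ingredient---is obtained in the paper not by blow-up or by a comparison of min--max values, but by a test-function argument adapted from Chen: one mollifies $u_{\eps_k}$ to $u_{\eps_k,\eta}$, solves $-\Delta\psi=u_{\eps_k,\eta}-\overline{u_{\eps_k,\eta}}$, multiplies the Euler--Lagrange equation by $\nabla\psi\cdot\nabla u_{\eps_k}$, and integrates by parts; the right-hand side is controlled by $C(\eta)E_{\eps_k}(u_{\eps_k})$ while the left-hand side is bounded below by $\tfrac14|\lambda_{\eps_k}|\vol(M)$ once one checks $\int_M u_{\eps_k}(u_{\eps_k,\eta}-\overline{u_{\eps_k,\eta}})\to\vol(M)$. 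Second, the $L^\infty$ bound is not independent of the multiplier bound as you suggest: in the paper it is deduced \emph{after} the multiplier bound, using $\eps_k\lambda_{\eps_k}\to 0$ in a Moser-type iteration, so your order of verification should be reversed.
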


\begin{proof} Choose a sequence $\eps_k \to 0$ so that $\tilde c(\eps_k,p)\to \tilde c(p)$.  Let $u_{\eps_k}$ be the critical points of $F_{\eps_k,\lambda_{\eps_k}}$ constructed in Proposition \ref{ace}.  Then $E_{\eps_k}(u_{\eps_k})$ and $\ind_{F_{\eps_k,\lambda_{\eps_k}}}(u_{\eps_k})$ are uniformly bounded.  According to Hutchinson and Tonegawa \cite{HT} Section 6.1 and Lemma 3.4 in \cite{XC}, the Lagrange multipliers $\lambda_{\eps_k}$ are also uniformly bounded and there is a constant $K > 0$ such that $\|u_{\eps_k}\|_{L^\infty}\le K$ for all $k$.  For the interested reader, we include the details of the argument in the appendix. Applying Theorem \ref{bw} to the sequence $u_{\eps_k}$ now yields the result. 
\end{proof}

\section{Appendix} 

The goal of the appendix is to prove the following proposition. We largely follow the sketch in \cite{HT} section 6.1, giving details as appropriate.  The proof that the Lagrange multipliers are bounded depends on work of Chen \cite{XC}. 

\begin{prop}
Assume that the potential $W$ satisfies the growth condition (vii). Choose a sequence $\eps_k\to 0$ and let $u_{\eps_k}$ be a critical point of $F_{\eps_k,\lambda_{\eps_k}}$.  Assume that $E_{\eps_k}(u_{\eps_k})$ is uniformly bounded.  Then the Lagrange multipliers $\lambda_{\eps_k}$ are uniformly bounded and $\|u_{\eps_k}\|_{L^\infty}$ is also uniformly bounded.
\end{prop}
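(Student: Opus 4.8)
The plan is to establish the two conclusions in turn: first that the Lagrange multipliers $\lambda_{\eps_k}$ are uniformly bounded, and then, using this, that $\|u_{\eps_k}\|_{L^\infty}$ is uniformly bounded (indeed $\le\beta$ for $k$ large). First I record the consequences of the energy bound. Since $W\ge0$, the hypothesis $E_{\eps_k}(u_{\eps_k})\le C$ gives $\int_M\eps_k|\grad u_{\eps_k}|^2\le 2C$ and $\int_M W(u_{\eps_k})\le C\eps_k$; by the growth condition (vii) the latter yields $\int_{\{|u_{\eps_k}|\ge\beta\}}|u_{\eps_k}|^q\le C'\eps_k$, so $u_{\eps_k}$ is $L^q$-small on the set where it is large. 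Also, each $u_{\eps_k}$ is smooth: from $|W'(t)|\le C(1+|t|^{q-1})$ with $q-1<6/5$ (using (vii)), the Sobolev embedding (using $n+1\le7$, $q<11/5$), and a standard elliptic bootstrap, the weak solution $u_{\eps_k}$ of $-\eps_k\lap u_{\eps_k}+\eps_k^{-1}W'(u_{\eps_k})=\lambda_{\eps_k}$ is a classical one. Integrating this equation over the closed manifold $M$ kills the Laplacian and gives
\[
|\lambda_{\eps_k}|=\bigl(\eps_k\vol(M)\bigr)^{-1}\Bigl|\int_M W'(u_{\eps_k})\Bigr|,
\]
so the first conclusion amounts to the estimate $\eps_k^{-1}\bigl|\int_M W'(u_{\eps_k})\bigr|\le C$.

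The bound on $\lambda_{\eps_k}$ is the heart of the matter, and the step I expect to cost the most work; it is where one invokes Hutchinson--Tonegawa \cite{HT} (Section 6.1) and Chen \cite{XC} (Lemma 3.4). The difficulty is that the crude estimates fall short: away from the transition region one can use $|W'(t)|^2\le c\,W(t)$ near the wells, but over the transition layers this only bounds $\eps_k^{-1}\int_M|W'(u_{\eps_k})|$ by $\eps_k^{-1}\bigl(\int_M W(u_{\eps_k})\bigr)^{1/2}\vol(M)^{1/2}=O(\eps_k^{-1/2})$. What rescues the argument is cancellation in the signed integral: for the one-dimensional heteroclinic profile $\bar u$ solving $\bar u''=W'(\bar u)$ one has $\int_{\R}W'(\bar u)=\int_{\R}\bar u''=0$ identically, so the net flux of $W'$ across an optimal Allen--Cahn layer vanishes, and the energy bound forces $u_{\eps_k}$ to resemble a superposition of such profiles closely enough that this cancellation survives up to an $O(1)$ error. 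Carrying this out --- extracting $|\lambda_{\eps_k}|\le\Lambda$, with $\Lambda$ independent of $k$, from the equation and the energy bound --- is precisely the content of \cite{HT, XC}, and it is the technically delicate part. (In the application of the proposition the $u_{\eps_k}$ moreover satisfy $\int_M u_{\eps_k}=0$; this constraint, together with the energy bound, is what excludes degenerate situations such as a near-constant critical point clustered near a single well, which would be a local minimizer with bounded energy but with $|\lambda_{\eps_k}|\sim\eps_k^{-1/2}$.)

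Granting $|\lambda_{\eps_k}|\le\Lambda$, the $L^\infty$ bound is then short. Set $v_k=(u_{\eps_k}-\beta)_+\ge0$; the region where $u_{\eps_k}<-\beta$ is handled identically using that $W$ is even. On $\{u_{\eps_k}>\beta\}$ the equation and the lower bound $W'(t)\ge C_1 t^{q-1}$ for $t\ge\beta$ from (vii) give
\begin{align*}
\lap u_{\eps_k} &= \eps_k^{-2}W'(u_{\eps_k})-\eps_k^{-1}\lambda_{\eps_k}\ge \eps_k^{-2}C_1 u_{\eps_k}^{\,q-1}-\eps_k^{-1}\Lambda\\
&\ge \tfrac12 C_1\eps_k^{-2}u_{\eps_k}^{\,q-1}\ge \tfrac12 C_1\eps_k^{-2}v_k^{\,q-1},
\end{align*}
where the second inequality uses $u_{\eps_k}^{q-1}\ge\beta^{q-1}$ together with $\eps_k\le C_1\beta^{q-1}/(2\Lambda)$, valid for $k$ large. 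Since truncation adds only a nonnegative singular term to the Laplacian along $\{u_{\eps_k}=\beta\}$, while $v_k$ vanishes identically where $u_{\eps_k}\le\beta$, Kato's inequality promotes this to $\lap v_k\ge\tfrac12 C_1\eps_k^{-2}v_k^{q-1}$ in the sense of distributions on all of $M$. Pairing with the constant function, $0=\int_M\lap v_k\ge\tfrac12 C_1\eps_k^{-2}\int_M v_k^{q-1}\ge0$, which forces $v_k\equiv0$, i.e. $u_{\eps_k}\le\beta$. The symmetric argument gives $u_{\eps_k}\ge-\beta$, so $\|u_{\eps_k}\|_{L^\infty}\le\beta$ for all large $k$.
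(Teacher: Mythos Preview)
The proposal has a genuine gap: the bound on $\lambda_{\eps_k}$ is not actually proved. You correctly identify it as the delicate step, but then offer only a heuristic (cancellation of $W'$ across one-dimensional heteroclinic profiles) and defer to \cite{HT,XC}. That is an outline, not a proof, and moreover your heuristic is \emph{not} the mechanism those references use: neither Chen nor the paper tries to show $\eps_k^{-1}\bigl|\int_M W'(u_{\eps_k})\bigr|=O(1)$ directly. The paper instead follows \cite{XC} and builds an auxiliary function $\psi$ solving $-\Delta\psi=u_{\eps_k,\eta}-\overline u_{\eps_k,\eta}$, where $u_{\eps_k,\eta}$ is a mollification of $u_{\eps_k}$ (the mollification is precisely so that $\psi$ enjoys $C^2$ bounds independent of $k$). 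Multiplying the Euler--Lagrange equation by $\nabla\psi\cdot\nabla u_{\eps_k}$ and integrating by parts yields a Pohozaev-type identity
\[
\lambda_{\eps_k}\int_M\nabla\psi\cdot\nabla u_{\eps_k}
=\eps_k\int_M D^2\psi(\nabla u_{\eps_k},\nabla u_{\eps_k})-\int_M\Bigl(\tfrac{\eps_k}{2}|\nabla u_{\eps_k}|^2+\tfrac{W(u_{\eps_k})}{\eps_k}\Bigr)\Delta\psi.
\]
The right-hand side is bounded by $C(\eta)E_{\eps_k}(u_{\eps_k})$; a string of preparatory lemmas (that $\int_M(|u_{\eps_k}|-1)^2\to 0$, that $|x-y|^2\le C|\Phi(x)-\Phi(y)|$ for $\Phi(s)=\int_0^s\sqrt{W/2}$, and mollifier-error estimates) shows the factor $\int_M\nabla\psi\cdot\nabla u_{\eps_k}=\int_M u_{\eps_k}(u_{\eps_k,\eta}-\overline u_{\eps_k,\eta})$ is bounded below by $\tfrac14\vol(M)$ for $\eta$ small and $k$ large. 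This test-function construction is the missing idea in your proposal.

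Your $L^\infty$ argument, by contrast, is correct once $|\lambda_{\eps_k}|\le\Lambda$ is in hand, and is cleaner than the paper's. The paper runs a Moser-type iteration after rescaling: it multiplies the equation by $|u|^{p-1}u$, obtains a recursive bound on $\int|u|^{p+q-1}$ in terms of $\int|u|^p$, iterates, and sends $p\to\infty$. Your truncation $v_k=(u_{\eps_k}-\beta)_+$, Kato inequality $\Delta v_k\ge\chi_{\{u_{\eps_k}>\beta\}}\Delta u_{\eps_k}$, and pairing with the constant function yield $\|u_{\eps_k}\|_{L^\infty}\le\beta$ in a single step; this is a genuine simplification. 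Your parenthetical remark about the constraint $\int_M u_{\eps_k}=0$ is also well taken: the paper's bound on the average $\overline u_{\eps_k,\eta}$ quietly uses it, even though the proposition as stated does not.
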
 

\begin{proof} 
The first step is to check that each $u_{\eps_k}$ is smooth.  Recall that $\vert W'(u_{\eps_k})\vert \le C\vert u_{\eps_k}\vert^{q-1}$ for $\vert u_{\eps_k}\vert \ge \beta$.  For simplicity, we will give the argument assuming $3 \le n+1 \le 5$.    In this case, by the Sobolev embedding theorem, $u_{\eps_k}$ belongs to $L^{10/3}$.  
It follows that $W'(u_{\eps_k}) \in L^{10/(3q-3)}$. 
Hence by elliptic regularity we have $u_{\eps_k} \in W^{2,q_1}$ for $q_1 = 10/(3q-3)$.  Note that 
\[
\frac{n+1}{q_1} \le \frac{3}{2}(q-1) \le \frac{18}{10} < 2. 
\]
Thus by the Sobolev inequalities we obtain that $u_{\eps_k}$ is H\"older continuous. Standard elliptic regularity then implies that $u_{\eps_k}$ is smooth.  The cases $6 \le n+1\le 7$ are handled similarly.  One applies the Sobolev inequalities together with elliptic regularity several times.  Each application improves the regularity of $u_{\eps_k}$ until eventually one obtains $u_{\eps_k} \in W^{2,q_1}$ with $2 > (n+1)/q$. This gives H\"older continuity of $u_{\eps_k}$, and standard elliptic regularity then implies that $u_{\eps_k}$ is smooth. Note at this point we do not have uniform $L^\infty$ estimates on $u_{\eps_k}$.

To prove that the Lagrange multipliers are bounded we follow \cite{XC} Lemma 3.4.  Note that \cite{XC} Lemma 3.4 is proved for domains in Euclidean space.  Some addition difficulties arise in adapting the mollifier arguments used in \cite{XC} to the case of a closed manifold.  We need to prove a sequence of lemmas.  In what follows, $C$ denotes a positive constant that is allowed to change from line to line. 

\begin{lem}
\label{l2c}
We have 
\[
\int_M (\vert u_{\eps_k}\vert - 1)^2 \to 0, \quad \text{as } k\to\infty.
\]
\end{lem}

\begin{proof}
Define the sets 
\begin{gather*}
A_1 =\{\vert \vert u_{\eps_k}\vert - 1\vert \le \eps_k^{1/4}\},\\
A_2 =  \{\eps_k^{1/4} \le \vert\vert u_{\eps_k} \vert - 1\vert \le \beta - 1\},\\  
A_3 = \{\vert u_{\eps_k}\vert \ge \beta\}.
\end{gather*} 
We will estimate the integral over the sets $A_1$, $A_2$, and $A_3$ separately.  For $A_1$, we have 
\[
\int_{A_1} (\vert u_{\eps_k} \vert - 1)^2 \le \eps_k^{1/2} \vol(M). 
\]
Regarding $A_2$, property (iii) of $W$ imply that there is a constant $c > 0$ independent of $k$ such that $\vert W(x) \vert  \ge c \eps_k^{1/2}$ whenever $\vert \vert x\vert - 1\vert \ge \eps_k^{1/4}$.   
Therefore the set $A_2$ has measure at most 
$
{ \eps_k^{1/2} }{c\inv} E_{\eps_k}(u_{\eps_k})
$
and so 
\[
\int_{A_2} (\vert u_{\eps_k} \vert - 1)^2 \le (\beta-1)^2 c\inv \eps_k^{1/2} E_{\eps_k}(u_{\eps_k}). 
\]
It remains to estimate the integral over $A_3$.  Without loss of generality we can assume that $\beta \ge 5$.   Remember that $C \vert W(x)\vert \ge  \vert x\vert^q$ for $\vert x\vert \ge \beta$ and so 
\[
(\vert u_{\eps_k}\vert - 1)^2 \le \vert u_{\eps_k}\vert^2 \le \vert u_{\eps_k}\vert^q \le C \vert W(u_{\eps_k})\vert
\]
whenever $\vert u_{\eps_k}\vert \ge \beta$.    Thus we have the estimate 
\[
\int_{A_3} (\vert u_{\eps_k} \vert - 1)^2 \le \int_{A_3} C \vert W(u_{\eps_k})\vert \le C \eps_k E_{\eps_k}(u_{\eps_k}). 
\]
Combining these three estimates shows that  
\[
\int_M (\vert u_{\eps_k}\vert - 1)^2 \le  C\eps_k^{1/2}(1+E_{\eps_k}(u_{\eps_k})).
\]
The right hand side goes to 0 as $k\to \infty$. 
\end{proof} 

Define 
\[
\Phi(s) = \int_0^s \sqrt{W(s)/2}\, ds.
\]
Let $w_{\eps_k} = \Phi\circ u_{\eps_k}$.  It is easy to check that $\grad w_{\eps_k}$ is uniformly bounded in $L^1$ (see \cite{HT}). 

\begin{lem} 
\label{l1} 
There is a constant $C > 0$ such that $\vert x- y\vert^2 \le C \vert \Phi(x)-\Phi(y)\vert$ for all $x,y\in \R$.  
\end{lem} 

\begin{proof} 
We check a number of cases.  First suppose that $x, y \ge \beta$ and assume without loss of generality that $x \ge y$.  Then 
\begin{align*}
\vert \Phi(x)-\Phi(y)\vert &= \int_y^x \sqrt{W(s)/2} \, ds \ge \frac{1}{\sqrt 2} \int_y^x  s^{q/2}\, ds \\ 
&\ge \frac{1}{\sqrt 2} \int_y^x  s \, ds = \frac{1}{2\sqrt 2}(x^2-y^2) \ge \frac{1}{2\sqrt 2} \vert x-y\vert^2,
\end{align*}
where the last inequality uses the fact that $x \ge y \ge 0$ and so $x+y \ge x-y$. The same argument works in the case where $x,y\le -\beta$.  

Now suppose that $x\ge \beta$ and $y \le -\beta$. Let $a = \int_{-\beta}^\beta \sqrt{W(s)/2}\, ds > 0$. Then 
\begin{align*}
\vert \Phi(x) - \Phi(y)\vert &= \int_{\beta}^x \sqrt{W(s)/2}\, ds + a +  \int_{y}^{-\beta} \sqrt{W(s)/2}\, ds\\
&\ge \frac{1}{\sqrt 2} \int_{\beta}^x s\, ds  + a + \frac{1}{\sqrt 2} \int_{y}^{-\beta} (-s)\, ds\\
&\ge \frac{1}{2\sqrt 2}(x^2-\beta^2) + a + \frac{1}{2\sqrt 2}(y^2-\beta^2). 
\end{align*}
We have 
\begin{align*}
\vert x-y\vert^2 \le 2(x^2+y^2) \le 4\beta^2 + 2(x^2 -\beta^2) + 2(y^2-\beta^2) \le C\vert \Phi(x) - \Phi(y)\vert.
\end{align*}
The same argument works if $x \le -\beta$ and $y \ge \beta$. 

It remains to handle the case when $-\beta \le x,y\le \beta$. Assume for contradiction there are two sequences $x_j,y_j\in [-\beta,\beta]$ such that 
\begin{equation}
\label{eq3} 
\vert x_j-y_j\vert^2 > j \vert \Phi(x_j) - \Phi(y_j)\vert. 
\end{equation}
Passing to a subsequence if necessary, we can assume that $x_j \to x\in [-\beta,\beta]$ and $y_j \to y\in [-\beta,\beta]$. It is clear from (\ref{eq3}) that $x= y$. Passing to the limit in 
\[
\vert x_j - y_j\vert \ge j\frac{\vert \Phi(x_j) - \Phi(y_j)\vert}{\vert x_j-y_j\vert}
\]
we obtain that $\Phi'(x) = 0$. Thus $W(x) = 0$ and so $x = \pm 1$.  Without loss of generality assume that $x = 1$.  Note that there is a constant $C > 0$ such that $W(s) \ge C \vert s - 1\vert^2$ for $s$ close to 1.  Thus 
\begin{align*}
\vert \Phi(x_j) - \Phi(y_j)\vert = \left\vert \int_{y_j}^{x_j} \sqrt{W(s)/2} \, ds\right\vert \ge C \left\vert \int_{y_j}^{x_j} \vert s-1\vert \, ds\right\vert.
\end{align*}
If $x_j \ge y_j \ge 1$ then 
\begin{align*}
\left\vert \int_{y_j}^{x_j} \vert s-1\vert \, ds\right\vert &= \int_{y_j}^{x_j} s-1\, ds = \frac{1}{2}\left[(x_j-1)^2 - (y_j-1)^2 \right]\\
&= \frac 1 2 \left[ (x_j+y_j-2)(x_j-y_j)\right] \ge \frac{1}{2}(x_j-y_j)^2,
\end{align*}
which combined with the previous equation yields a contradiction.  If $x_j \ge 1 \ge y_j$ then 
\begin{align*}
\left\vert \int_{y_j}^{x_j} \vert s-1\vert \, ds\right\vert &= \int_{y_j}^1 1-s\, ds + \int_1^{x_j} s-1\, ds \\
&= \frac 1 2 \left[(1-y_j)^2 + (x_j-1)^2\right] \ge \frac{1}{4}(x_j-y_j)^2
\end{align*}
since either $x_j - 1 \ge \frac{1}{2}(x_j - y_j)$ or $1 - y_j \ge \frac{1}{2}(x_j-y_j)$. Again this gives a contradiction. The remaining possibilities likewise lead to contradiction and the lemma is proved. 
\end{proof}

For each $\eta \in (0,1)$, let $u_{\eps_k,\eta}$ be a mollified version of $u_{\eps_k}$.  More precisely, choose an isometric embedding of $M$ into $\R^m$.  Let $N$ be a small tubular neighborhood of $M$ where the nearest point projection $\Pi\f N\to M$ is well-defined and a submersion.  Let $B_r(x)$ denote the open ball of radius $r$ centered at $x\in \R^m$.  Also let $B_1$ denote the open unit ball centered at the origin in $\R^m$.  Choose a non-negative smooth function $\rho\f B_1 \to \R$ which is compactly supported in $B_1$ and satisfies 
\[
\int_{B_1} \rho(y)\, d\mathcal L^{m}(y) = 1. 
\]
Extend $u_{\eps_k}$ to a function $v_{\eps_k}$ on $N$ by setting $v_{\eps_k} = u_{\eps_k}\circ \Pi$. Then let  
\[
u_{\eps_k,\eta}(x) =  \int_{B_1} \rho(y) v_{\eps_k}(x-\eta y)   \, d\mathcal L^{m}(y), \quad x\in M
\]
be the mollified version of $u_{\eps_k}$. 

\begin{lem} 
\label{linf}
For each fixed $\eta$, there is a uniform bound $\|u_{\eps_k,\eta}\|_{L^\infty(M)} \le C(\eta)$.
\end{lem}

\begin{proof} 
Observe that 
\begin{align*}
\vert u_{\eps_k,\eta}(x) \vert &\le \int_{B_1} \rho(y) \vert v_{\eps_k}(x-\eta y)\vert \, d\mathcal L^{m}(y)\\ 
&\le 1 + \int_{B_1} \rho(y) \big\vert \vert v_{\eps_k}(x-\eta y)\vert - 1\big\vert\, d\mathcal L^{m}(y)\\
&\le 1 + C\eta^{-(n+1)} \int_{B_\eta(x)} \big\vert \vert v_{\eps_k}(z)\vert - 1\big\vert\, d\mathcal L^{m}(z).
\end{align*}
By the co-area formula we have
\begin{align*}
&\int_{B_\eta(x)} \big\vert \vert v_{\eps_k}(z)\vert - 1\big\vert\, d\mathcal L^{m}(z) \\
&\qquad \le  C \int_{B_\eta(x)} \big\vert \vert v_{\eps_k}(z)\vert - 1\big\vert\, J\Pi(z) \, d\mathcal L^{m}(z)\\
&\qquad \le C \int_{\Pi(B_\eta(x))} \int_{p\in \Pi\inv(q)} \big\vert \vert v_{\eps_k}(p)\vert - 1 \big\vert \, d\mathcal H^{m-n-1}(p) \, d\mathcal H^{n+1}(q)\\
&\qquad \le C \int_{\Pi(B_\eta(x))} \big\vert \vert u_{\eps_k}(q)\vert - 1 \big\vert\, d\mathcal H^{n+1}(q).
\end{align*}
Inserting this into the previous equation and using Lemma \ref{l2c} gives the result.
\end{proof}

\begin{lem} 
\label{linfd} 
For each fixed $\eta$, there is a uniform bound $\|\grad u_{\eps_k,\eta}\|_{L^\infty(M)} \le C(\eta)$.
\end{lem}

\begin{proof} Fix an index $i\in\{1,\hdots,m\}$. Note the formula for $u_{\eps_k,\eta}(x)$ also makes sense for $x\in N$ so we can regard $u_{\eps_k,\eta}$ as a function defined on $N$.  For $x\in M$ we have
\begin{align*}
\bd_i u_{\eps_k,\eta}(x) &= \int_{B_1} \bd_i \rho(y) v_{\eps_k}(x-\eta y)\, d\mathcal L^m(y).
\end{align*}
Thus we have 
\begin{align*}
\vert \bd_i u_{\eps_k,\eta}(x)\vert &\le \int_{B_1} \vert \bd_i\rho(y)\vert \vert v_{\eps_k}(x-\eta y)\vert \, d\mathcal L^m(y)\\ 
&\le C + \int_{B_1} \vert \bd_i\rho(y)\vert (\vert v_{\eps_k}(x-\eta y)\vert - 1)\, d\mathcal L^m(y) \\
&\le C + C\eta^{-(n+1)} \int_{B_{\eta(x)}} \big\vert \vert v_{\eps_k}(z)\vert - 1 \big\vert\, d\mathcal L^m(z).
\end{align*}
Using the coarea formula as in the proof of the previous lemma now gives the result.
\end{proof}  

\begin{lem}
\label{mc}
There is a uniform bound $\|u_{\eps_k,\eta} - u_{\eps_k}\|^2_{L^2(M)} \le C \eta$ for $k \ge K(\eta)$. 
\end{lem}

\begin{proof}
Observe that 
\begin{align*}
\int_M \vert u_{\eps_k,\eta} - u_{\eps_k}\vert^2 &= \int_M \left\vert \int_{B_1} \rho(y)v_{\eps_k}(x-\eta y)\, d\mathcal L^{m}(y) - u_{\eps_k}(x)\right\vert^2 \,d\mathcal H^{n+1}(x)\\
&\le \int_M \int_{B_1} \rho(y) \vert v_{\eps_k}(x-\eta y) - u_{\eps_k}(x)\vert^2 \, d\mathcal L^{m}(y)\, d\mathcal H^{n+1}(x)\\
&\le C \int_M \int_{B_1} \rho(y) \vert f(x-\eta y) - f(x)\vert \, d\mathcal L^{m}(y)\, d\mathcal H^{n+1}(x),
\end{align*} 
where $f = \Phi\circ u_{\eps_k}\circ \Pi = w_{\eps_k}\circ \Pi$ and we used Lemma \ref{l1} to get the last inequality. By Fubini's theorem we get 
\begin{align*}
&\int_M \int_{B_1} \rho(y) \vert f(x-\eta y) - f(x)\vert \, d\mathcal L^{m}(y)\, d\mathcal H^{n+1}(x) \\
&\qquad \le \eta \int_M \int_{B_1} \int_0^1 \rho(y) \vert \grad f(x- t \eta y)\vert \, \mathcal \,dt \, L^{m}(y)\, d\mathcal H^{n+1}(x)\\
&\qquad = \eta \int_{B_1} \rho(y) \int_0^1 \int_M \vert \grad f(x-t\eta y)\vert \, d\mathcal H^{n+1}(x) \, dt\, d\mathcal L^{m}(y).
\end{align*}
Now for fixed $y$ and $t$, we have 
\[
\int_M \vert \grad f(x-t\eta y)\vert \, d\mathcal H^{n+1}(x) = \int_{M-t\eta y} \vert \grad f(z)\vert \, d\mathcal H^{n+1}(z).
\]
Provided $\eta$ is small enough, the map $\Pi\f M-t\eta y \to M$ is a diffeomorphism and so by the change of variables forumla 
\begin{align*}
\int_{M-t\eta y} \vert \grad f(z)\vert \, d\mathcal H^{n+1}(z) &\le \int_{M-t\eta y} \vert \grad w_{\eps_k}(\Pi(z))\vert  \, d\mathcal H^{n+1}(z) \\
&\le C\int_{M-t\eta y} \vert \grad w_{\eps_k}(\Pi(z))\vert J\Pi(z) \, d\mathcal H^{n+1}(z) \\
&= C\int_M \vert \grad w_{\eps_k}(q)\vert \, d\mathcal H^{n+1}(q).
\end{align*} 
Thus we obtain 
\begin{align*}
&\eta \int_{B_1} \rho(y) \int_0^1 \int_M \vert \grad f(x-t\eta y)\vert \, d\mathcal H^{n+1}(x) \, dt\, d\mathcal L^{m}(y)\\
&\qquad \le C\eta \int_{B_1} \rho(y) \|w_{\eps_k}\|_{L^1(M)} \, d\mathcal L^{m}(y) \le C\eta. 
\end{align*}
Putting everything together we get 
\[
\int_M \vert u_{\eps_k,\eta} - u_{\eps_k}\vert^2  \le C\eta,
\]
and the result follows. 
\end{proof} 

\begin{lem}
\label{avg} 
Let $\overline u_{\eps_k,\eta}$ be the average of $u_{\eps_k,\eta}$. 
There is a uniform bound $\vert \overline u_{\eps_k,\eta}\vert\le C\eta^{\frac{1}{2}}$ for $k\ge K(\eta)$. 
\end{lem} 

\begin{proof}
Observe that 
\begin{align*}
\overline u_{\eps_k,\eta} &= \frac{1}{\vol(M)} \int_M u_{{\eps_k},\eta}\, = \frac{1}{\vol(M)} \int_M (u_{\eps_k,\eta}- u_{\eps_k}).
\end{align*}
Now the result follows from Lemma \ref{mc} and H\"older's inequality.
\end{proof}

Now fix an $\eta$ to be specified later. Given a large integer $k$, let $\psi$ be the solution to  
\[
-\lap \psi = u_{\eps_k,\eta} - \overline u_{\eps_k,\eta}, \quad \int_{M} \psi = 0.
\]
By Lemmas \ref{linf}, \ref{linfd}, and \ref{avg} the right hand side of the above PDE is uniformly bounded in $C^{1}$.  Therefore by elliptic regularity, $\psi$ is bounded in $C^{2}$ by a constant that depends on $\eta$ but not on $k$.  Note that $u_{\eps_k}$ satisfies the PDE 
\[
- {\eps_k} \lap u_{\eps_k} + \frac{W'(u_{\eps_k})}{\eps_k} = \lambda_{\eps_k}.
\]
Multiplying by $\grad \psi \cdot \grad u_{\eps_k}$ and integrating yields 
\begin{equation}
\label{eq1}
\lambda_{\eps_k} \int_M \grad \psi \cdot \grad u_{\eps_k} =  \int_M  \grad \psi \cdot \grad u_{\eps_k}\left(-{\eps_k}\lap u_{\eps_k} + \frac{W'(u_{\eps_k})}{\eps_k}\right).
\end{equation}
Observe that 
\[
\int_M \grad \psi\cdot \grad u_{\eps_k} \frac{W'(u_{\eps_k})}{\eps_k} = \int_M \grad \psi\cdot \grad \left(\frac{W(u_{\eps_k})}{\eps_k}\right) = - \int_M \frac{W(u_{\eps_k})}{\eps_k}\lap \psi.
\]
Also, by the integration by parts formula for the Hessian, we have 
\begin{align*}
\eps_k \int_M D^2 \psi(\grad u_{\eps_k},\grad u_{\eps_k}) = -\eps_k \int_M \grad \psi \cdot \grad u_{\eps_k} \lap u_{\eps_k} + \frac{\eps_k}{2}\int_M \vert \grad u_{\eps_k}\vert^2 \lap \psi. 
\end{align*}
Thus we have the following formula for the right hand side of (\ref{eq1}):
\begin{align*}
&\int_M  \grad \psi \cdot \grad u_{\eps_k}\left(-{\eps_k}\lap u_{\eps_k} + \frac{W'(u_{\eps_k})}{\eps_k}\right) \\
&\qquad = \eps_k \int_M D^2\psi(\grad u_{\eps_k},\grad u_{\eps_k}) - \int_M \left(\frac{\eps_k}{2}\vert \grad u_{\eps_k}\vert^2 + \frac{W(u_{\eps_k})}{\eps_k}\right)\lap \psi.
\end{align*}
Since $\|\psi\|_{C^2} \le C(\eta)$, this gives a bound 
\[
\left\vert \int_M  \grad \psi \cdot \grad u_{\eps_k}\left(-{\eps_k}\lap u_{\eps_k} + \frac{W'(u_{\eps_k})}{\eps_k}\right)\right\vert \le C(\eta) E_{\eps_k}(u_{\eps_k}).
\]
We now turn attention to the left hand side of (\ref{eq1}).  Integrating by parts, we have 
\[
\lambda_{\eps_k}\int_M \grad \psi \cdot \grad u_{\eps_k} = -\lambda_{\eps_k} \int_M u_{\eps_k}\lap \psi = \lambda_{\eps_k} \int_M u_{\eps_k}(u_{\eps_k,\eta} - \overline u_{\eps_k,\eta}).
\]
Now observe that 
\begin{align*}
\int_M u_{\eps_k}(u_{\eps_k,\eta} - \overline u_{\eps_k,\eta}) = \int_M u_{\eps_k}(u_{\eps_k,\eta} - u_{\eps_k}) + \int_M (u_{\eps_k}^2-1) - \overline u_{\eps_k,\eta} \int_M u_{\eps_k} + \vol(M).
\end{align*}
By Lemmas \ref{l2c} and \ref{avg} and H\"older's inequality, we can select $\eta$ small enough that 
\[
\left\vert\int_M u_{\eps_k}(u_{\eps_k,\eta} - u_{\eps_k})\right\vert \le \frac{1}{4}\vol(M), \quad 
\left\vert \overline u_{\eps_k,\eta} \int_M u_{\eps_k}\right\vert \le \frac{1}{4}\vol(M). 
\]
By Lemma \ref{l2c} and H\"older's inequality, we have
\[
\left\vert \int_M (u_{\eps_k}^2 - 1)\right\vert \le \int_M \vert \vert u_{\eps_k}\vert - 1\vert (\vert u_{\eps_k}\vert + 1) \le  \frac{1}{4}\vol(M)
\]
for $k$ large enough. It follows that 
\[
\int_M \grad \psi \cdot \grad u_{\eps_k} \ge \frac{1}{4}\vol(M)
\]
for large enough $k$. 
Using equation (\ref{eq1}) then gives an upper bound on $\lambda_{\eps_k}$ which is independent of $k$. 

To complete the proof of the proposition, it remains to show that $\|u_{\eps_k}\|_{L^\infty}$ is uniformly bounded.  Let $M_{\eps_k} = M/\eps_k$.  Define the rescaled functions $f_{\eps_k}\f M_{\eps_k} \to \R$ by $f_{\eps_k}(x) = u_{\eps_k}(\eps_k x)$.  Then $f_{\eps_k}$ solves 
\[
-\lap f_{\eps_k} + W'(f_{\eps_k}) = \eps_k \lambda_{\eps_k}.
\]
Fix some $p \ge 1$.  Multiplying the equation by $\vert f_{\eps_k}\vert^{p-1} f_{\eps_k}$ and integrating gives 
\begin{equation}
\label{eq4}
\int_{M_{\eps_k}} p\vert f_{\eps_k}\vert^{p-1}\vert \grad f_{\eps_k}\vert^2 + \int_{M_{\eps_k}} W'(f_{\eps_k})\vert f_{\eps_k}\vert^{p-1}f_{\eps_k} = \eps_k\lambda_{\eps_k} \int_{M_{\eps_k}} \vert f_{\eps_k}\vert^{p-1} f_{\eps_k}.
\end{equation}
Now for $\vert x\vert \ge \beta$ we have $W'(x) \vert x\vert^{p-1} x \ge C\vert x\vert^{p+q-1}$.  Since $\eps_k \lambda_{\eps_k} \to 0$, it follows from (\ref{eq4}) that  
\[
\int_{\{f_{\eps_k} \ge \beta\}} \vert f_{\eps_k}\vert^{p+q-1} \le \frac{1}{2} \int_{M_{\eps_k}} \vert f_{\eps_k}\vert^p
\]
assuming $k$ is large enough.  This implies that 
\[
\int_{M_{\eps_k}} \vert f_{\eps_k}\vert^{p+q-1} \le C \beta^{p+q-1} + \frac{1}{2} \int_{M_{\eps_k}} \vert f_{\eps_k}\vert^p.
\]
By induction, for any positive integer $r$, this gives 
\begin{align*}
\int_{M_{\eps_k}} \vert f_{\eps_k}\vert^{2+r(q-1)} &\le \frac{C \beta^2}{2^r} \sum_{j=1}^r (2\beta^{q-1})^j + 2^{-r}\int_{M_{\eps_k}} \vert f_{\eps_k}\vert^2\\
&= \frac{C\beta^2}{2^r}\left(\frac{2\beta^{q-1}(2^r \beta^{r(q-1)}-1)}{2\beta^{q-1}-1}\right) + 2^{-r}\int_{M_{\eps_k}} \vert f_{\eps_k}\vert^2\\
&\le C\beta^{2+r(q-1)} + 2^{-r}\int_{M_{\eps_k}} \vert f_{\eps_k}\vert^2. 
\end{align*}
Raising both sides to the power $(2+r(q-1))\inv$ and then sending $r\to \infty$ gives the bound $\|f_{\eps_k}\|_{L^\infty(M_{\eps_k})} \le \beta$ provided $k$ is large enough.  This implies that $\|u_{\eps_k}\|_{L^\infty(M)} \le \beta$ for all large $k$, as needed. 
\end{proof}

\bibliographystyle{plain}
\bibliography{cmc-doubling}

\end{document}